\newtheorem{definition}{Definition}
\newtheorem{theorem}{Theorem}
\newtheorem{lemma}{Lemma}
\newtheorem{corollary}{Corollary}
\newtheorem{claim}{Claim}[theorem]
\newenvironment{claimproof}[1]{\par\noindent\underline{Proof of claim:}\space#1}{\hfill $\blacksquare$}
\DeclareMathOperator{\Csp}{CSP}
\DeclareMathOperator{\Sg}{Sg}
\DeclareMathOperator{\Cg}{Cg}
\DeclareMathOperator{\Eq}{Eq}
\DeclareMathOperator{\Con}{Con}
\DeclareMathOperator{\Var}{\mathcal{V}}
\DeclareMathOperator{\A}{\mathbb{A}}
\tikzset{myStyle/.style={baseline=(center.base), font=\small,
    every node/.style={inner sep=0.25em} }}
\newcommand{\SquareUnwrapped}[4]{ 
  \node (center) at (0.5,-0.5) {\phantom{$\cdot$}}; 
  \path (0,0)  node (nw) {$#2$}
      ++(1,0)  node (ne) {$#4$}
      ++(0,-1) node (se) {$#3$}
      ++(-1,0) node (sw) {$#1$};
  \draw (nw) -- (ne) -- (se) -- (sw) -- (nw);
}  
\NewDocumentCommand{\SquareXY}{ O{} O{} O{} O{} O{1} O{1} }{ 
  \begin{tikzpicture}[myStyle, xscale=#5*1, yscale=#6*1 ]
    \SquareUnwrapped{#1}{#2}{#3}{#4}
  \end{tikzpicture}
}  
\NewDocumentCommand{\Square}{ O{} O{} O{} O{} O{1} }{ 
  \SquareXY[#1][#2][#3][#4][#5][#5]
}
\title{The class of congruence meet semidistributive varieties is not strong Maltsev }
\author{Andrew Moorhead\thanks{Institut f\"ur Algebra, TU Dresden. Email: andrew\textunderscore paul.moorhead@tu-dresden.de. The author has received funding from the ERC (Grant Agreement no. 101071674, POCOCOP). Views and opinions expressed are however
those of the author only and do not necessarily reflect those of the European Union or the European Research
Council Executive Agency. The author thanks Keith Kearnes, Michael Kompatscher, and Matthew Moore, each for past stimulating discussions about free algebras, equational logic, commutators, and higher dimensional congruences. }}
\date{}
\begin{document}

\maketitle

\begin{abstract}
   We present a proof that there is no single finite package of identities which characterizes the class of congruence meet semidistributive varieties. 
\end{abstract}

\section{Introduction}


A variety $\Var$ of algebras is \emph{congruence meet semidistributive} if each congruence lattice of its members satisfies the implication
\[
\gamma \wedge \alpha = \gamma \wedge \beta \implies \gamma \wedge (\alpha \vee \beta) = \gamma \wedge \alpha.
\]
It is one among the several classical congruence conditions (e.g.\ distributivitiy, modularity,  $n$-permutability) that have proved most useful in the taxonomy of varieties. Congruence meet semidistributivity has been more stubborn that its counterparts towards a characterization by what is called a \emph{Maltsev condition}, which roughly speaking is a (possibly infinite) disjunction of a sequence of finitely presented packages of identities where each condition is weaker than its predecessor. For example, Maltsev conditions for congruence distributivity, modularity, and $n$-permutability were known by the early 1970's (see \cite{jonsson-cd}, \cite{DayTerms}, and \cite{HagemannMitschke}, respectively), but the first progress for congruence semidistributivity was not reported until the 1980's, when Cz{\'e}dli found a \emph{weak} Maltsev condition for congruence meet semidistributivity (see \cite{CzedliCharSDMeet}). In the late 1990's Kearnes and Szendrei (see \cite{KearnesSzendreiRelTwoCommutators}) and Lipparini (see \cite{LippariniNeutral}) showed that congruence meet semidistributivity is truly characterized by a Maltsev condition. Shortly afterwords, Willard presented an explicit condition which he then used to prove that every residually finite congruence meet semidistributive variety is finitely based (see \cite{WillardTerms}).

The above description of the work of Kearnes and Szendrei and Lipparini cited above obscures what is perhaps the more important property that they discovered, which is that congruence meet semidistributive is in a sense perpendicular to \emph{abelianness}. Specifically, they showed that a variety $\Var$ is congruence meet semidistributive if and only if all commutator calculations in $\Var$ are \emph{neutral} (i.e. $[\alpha, \beta] = \alpha \wedge \beta$ for all congruences $\alpha, \beta$ of algebras $\mathbb{A} \in \Var$), which is the same thing as saying that there are no nontrivial abelian congruences of algebras in $\Var$. This has deep connections to finite domain fixed template constraint satisfaction problems. Barto and Kozik showed that local consistency methods solve $\Csp(\mathbf{B})$ for a fixed finite template of constraint relations $\mathbf{B}$ if and only if the algebra whose basic operations are the polymorphisms of $\mathbf{B}$ generates a congruence meet semidistributive variety (see \cite{BoundedWidth}). 

The connection between finite domain fixed template constraint satisfaction problems and Universal algebra has been remarkably fertile for both areas. On the one hand, the techniques of Universal algebra have provided a systematic framework in which to study the CSP, while on the other hand, the study of CSP within the Universal algebraic framework has produced new insights into the structure of locally finite varieties of algebras, which in some cases can even be extended to the class of all varieties. A striking example of this is that the insight of Siggers, that the class of locally finite Taylor varieties is characterized by a single finite package of identities (what is called a \emph{strong} Maltsev condition), was later extended by Ol\v{s}\'{a}k to the class of all Taylor varieties (see \cite{Siggers} and \cite{olsak-idempotent}, respectively). This was very exciting news at the time and it initiated a more intense scrutiny of the question of which among the classical Maltsev classes of varieties are strong Maltsev classes. Kozik, Krokhin,  Valeriote, and  Willard settle this question except for the class congruence meet semidistributive varieties (see \cite{KozikKrokBulWillardCharMaltsev}). We quote Ol\v{s}\'{a}k: `Meet semidistributive varieties are in a sense the last of the most important classes in universal algebra for which it is unknown whether it can be characterized by a strong Maltsev condition.' (see \cite{OlsakSDmeetterms}). 

We present here a proof that there is no finite package of identities which characterizes all congruence meet semidistributive varieties. There are two basic ingredients to the proof. In Section~\ref{sec:condition}, we define a specific sequence 
\[
\Sigma_1, \dots, \Sigma_n, \dots
\]
of finitely presented equational conditions which comprise a Maltsev condition for congruence meet semidistributivity. Then in Section~\ref{sec:notstrong}, we define a sequence of congruence meet semidistributive varieties
\[
\mathcal{W}_1, \dots, \mathcal{W}_l, \dots, 
\]
and argue syntactically that there does not exist $N$ so that each of the $\mathcal{W}_l$ has $\Sigma_N$-terms. We assume that the reader is familiar with the basics of Universal Algebra (standard references are \cite{McKenzieMcNultyTaylor} and \cite{BS}), the commutator (standard references are \cite{FreeseMcKenzie} and \cite{KearnesKiss}), and the notion of one variety $\mathcal{W}$ interpreting another variety $\mathcal{V}$ (see Chapter 6 of~\cite{ALV2} for a particularly nice exposition).

\section{A commutator related property characterizing congruence meet semidistributivity}\label{sec:condition}

Very broadly speaking, the commutator is a binary operation on the congruence lattice of an algebra which identifies the least congruence by which a quotient can be taken to obtain an algebra which has module structure (this should be treated as a slogan and not a definition). As noted in the introduction, it is well known that a variety of algebras $\mathcal{V}$ is congruence meet semidistributive if and only if $[\alpha, \beta] = \alpha \wedge \beta$ for all congruences $\alpha, \beta \in \Con(\mathbb{A})$ of algebras $\mathbb{A} \in \Var$ (see \cite{KearnesSzendreiRelTwoCommutators}). The commutator we refer to here is the \emph{term condition} commutator. In \cite{MoorheadTaylorSupNil}, it is shown that the neutrality of the term condition commutator is equivalent to the collapse of certain intervals in what we call the $(2)$-dimensional congruence lattice of any algebra belonging to $\Var$. This is the characterization of congruence meet semidistributivity that we will use to produce a Maltsev condition. We present a much abridged exposition of the theory of higher dimensional congruences and refer the reader to \cite{MoorheadTaylorSupNil}, \cite{MoorheadHigherKissTerms}, and \cite{MoorheadMaltsevComplexes} for a detailed development of the general picture, as well as \cite{keithagirossDifferenceTermChar} for a nontrivial application of the higher dimensional congruence perspective to different term varieties. 

The term condition can be thought of as the most basic among a collection of implications that must be satisfied in order for an algebra to possess module structure (or more generally a quasi-affine structure, as can be the case for Taylor algebras, see \cite{KearnesSzendreiRelTwoCommutators}). Formally, let $\mathbb{A}$ be an algebra and let $\theta_1$ and $\theta_2$ be congruences of $\mathbb{A}$. We define the algebra of $(\theta_1, \theta_2)$-matrices as 
\[
M(\theta_1, \theta_2) = 
\Sg_{A^{2^2}}
\left(
\left\{
\Square[x][x][y][y]: (x,y) \in \theta_1 
\right\}
\cup 
\left\{
\Square[x][y][x][y]: (x,y) \in \theta_2 
\right\}
\right).
\]

We then say that \emph{$\theta_1$ term condition centralizes $\theta_2$} if no matrix of $M(\theta_1, \theta_2)$ has one column which determines a pair of equal elements, while the opposite column determines a pair of unequal elements. The \emph{term condition} commutator is the least congruence $\delta$ that one can factor $\mathbb{A}$ by so that $\theta_1/ \delta$ centralizes $\theta_2/ \delta$. We denote this $\delta$ by $[\theta_1, \theta_2]_{TC}$.

The popularity of the term condition is likely due to the fact that it is computationally convenient to work with. In a congruence modular variety, it also possesses strong properties (see \cite{FreeseMcKenzie}), which mostly follow from the fact that the horizontal transitive closure of $M(\theta, \theta)$ (considered as a binary relation on its columns) is already vertically transitively closed (considered as a binary relation on its rows). It turns out that it is a good idea to enforce this $(2)$-dimensional transitivity at the outset. 

\begin{definition}
    Let $A$ be a set and let 
    $ R \subseteq A^{2^2}
    $. We say that $R $ is 
    \begin{enumerate}
        \item $(2)$-reflexive if $\Square[a][b][c][d] \in R$ implies $\Square[a][a][c][c], \Square[b][b][d][d], \Square[c][d][c][d] \Square[a][b][a][b] \in R$, 
        \item $(2)$-symmetric if $\Square[a][b][c][d] \in R$ implies $\Square[b][a][d][c], \Square[c][b][a][d]\in R$, 
        \item $(2)$-transitive if 
        \begin{itemize}
            \item  $\Square[a][b][c][d], \Square[c][d][e][f] \in R$ implies $\Square[a][b][e][f] \in R$
            \item $\Square[a][b][c][d], \Square[b][e][d][f] \in R$ implies $\Square[a][e][c][f] \in R$
        \end{itemize}
    \end{enumerate}
    We say that $R$ is a $(2)$-equivalence relation on $A$ if it is $(2)$-reflexive, $(2)$-symmetric, and $(2)$-transitive.  
    If $A$ is the universe of an algebra $\mathbb{A}$, we say that $R$ is 
    \begin{enumerate}
        \item A $(2)$-tolerance of $\mathbb{A}$ if it is $\mathbb{A}$-invariant, $(2)$-reflexive, and $(2)$-symmetric.
        \item A $(2)$-congruence of $\mathbb{A}$ if it is $\mathbb{A}$-invariant, $(2)$-refleive, $(2)$-symmetric, and $(2)$-transitive. 
    \end{enumerate}
\end{definition}

Given a set $X \subseteq A^{2^2}$, we denote by $\Eq_2(X)$ the least $(2)$-equivalence relation on $A$ that contains $X$. We may now refer to the $(2)$-congruence \emph{generated} by $X$, which is of course the least compatible $(2)$-equivalence of $\mathbb{A}$ containing the set $X$. Let us denote this $(2)$-congruence by $\Cg_2(X)$. We now define the relation 
\[
\Delta(\theta, \theta) = \Cg_2 \left(
\left\{
\Square[x][x][y][y]: (x,y) \in \theta_1 
\right\}
\cup 
\left\{
\Square[x][y][x][y]: (x,y) \in \theta_2 
\right\}
\right),
\]
for congruences $\theta_1$ and $\theta_2$ of an algebra $\mathbb{A}$. We say $\theta_1$ \emph{hypercentralizes} $\theta_2$ if no matrix of $\Delta(\theta_1, \theta_2)$ has one column which determines a pair of equal elements, while the opposite column determines a pair of unequal elements. We denote by $[\theta_1, \theta_2]_H$ the corresponding commutator. It is not hard to see that $\Delta(\theta, \theta)$ is the iterated transitive closure of $M(\theta, \theta)$, alternating between horizontal and vertical relational compositions, so it follows that $[\theta_1,\theta_2]_{TC} \leq [\theta_1, \theta_2]_H$. 

In fact, these two commutators are equal in a Taylor variety when evaluated at a constant pair of congruences (Theorem 4.9 of \cite{MoorheadTaylorSupNil}). Using this observation, it is possible to obtain the following characterization of congruence meet semidistributive varieties. For $\theta_1, \theta_2$ congruences of an algbera $\mathbb{A}$, we denote by $R(\theta_1, \theta_2) \leq A^{2^2}$ the set of all \emph{$(\theta_1, \theta_2)$-rectangles}, which are all matrices whose rows determine $\theta_1$-related pairs and columns determine $\theta_2$-related pairs. 

\begin{theorem}[Theorem 5.2 of \cite{MoorheadTaylorSupNil} ]\label{thm:mysdmeetcharacterization}
Let $\Var$ be a variety of algebras. The following are equivalent.
\begin{enumerate}
    \item $\Var$ is congruence meet semidistributive, and 
    \item $\Delta(\alpha, \alpha) = R(\alpha, \alpha)$ for any congruence $\alpha$ of an algebra $\mathbb{A} \in \Var$.
\end{enumerate}
\end{theorem}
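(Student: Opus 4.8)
The plan is to prove the equivalence by connecting both conditions to the neutrality of the hypercommutator $[\alpha,\alpha]_H$, using the fact (cited from Theorem~4.9 of \cite{MoorheadTaylorSupNil}) that $[\alpha,\alpha]_{TC} = [\alpha,\alpha]_H$ in a Taylor variety. First I would record the well-known equivalence that $\Var$ is congruence meet semidistributive if and only if $[\alpha,\beta]_{TC} = \alpha \wedge \beta$ for all congruences (the Kearnes--Szendrei/Lipparini neutrality characterization cited in the introduction); specializing to $\beta = \alpha$ gives $[\alpha,\alpha]_{TC} = \alpha$. Since congruence meet semidistributive varieties are Taylor varieties, the cited equality of commutators lets me pass freely between $[\alpha,\alpha]_{TC}$ and $[\alpha,\alpha]_H$. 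Thus condition~(1) should be shown equivalent to $[\alpha,\alpha]_H = \alpha$ for every $\alpha \in \Con(\mathbb{A})$, $\mathbb{A} \in \Var$.

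The geometric heart of the argument is to translate the algebraic identity $[\alpha,\alpha]_H = \alpha$ into the relational identity $\Delta(\alpha,\alpha) = R(\alpha,\alpha)$. The inclusion $\Delta(\alpha,\alpha) \subseteq R(\alpha,\alpha)$ is the easy direction: every generator of $\Delta(\alpha,\alpha)$ is a $(\alpha,\alpha)$-rectangle (the rows of $\Square[x][x][y][y]$ and of $\Square[x][y][x][y]$ are $\alpha$-pairs and likewise their columns), and $R(\alpha,\alpha)$ is itself $\mathbb{A}$-invariant and closed under the $(2)$-reflexivity, $(2)$-symmetry, and $(2)$-transitivity operations, so it contains the $(2)$-congruence they generate. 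The substance is the reverse inclusion $R(\alpha,\alpha) \subseteq \Delta(\alpha,\alpha)$ under the hypothesis of neutrality. Here I would argue that a matrix fails to lie in $\Delta(\alpha,\alpha)$ precisely when it witnesses a nonneutral behavior: if $\Delta(\alpha,\alpha) \ne R(\alpha,\alpha)$, then using $(2)$-transitivity one can straighten an arbitrary rectangle so as to produce a matrix in $\Delta(\alpha,\alpha)$ with a constant column and a non-constant opposite column, exhibiting a failure of hypercentralization and hence $[\alpha,\alpha]_H > \alpha$; conversely neutrality forces every rectangle into $\Delta(\alpha,\alpha)$.

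The main obstacle I anticipate is this reverse inclusion, specifically controlling how the quotient by the putative commutator interacts with the generated $(2)$-congruence. The clean way to manage it is to work with the quotient $\mathbb{A}/[\alpha,\alpha]_H$ and show that hypercentralization of $\alpha/\delta$ by itself is exactly the statement that the image of $\Delta$ contains no column-collapsing matrix, then lift this back through the natural correspondence between $(2)$-congruences of $\mathbb{A}$ and of its quotient. I would need to verify carefully that $\Delta$ commutes appropriately with quotient maps and that the $(2)$-transitivity axioms supply enough ``rectangle completion'' to deduce that any rectangle with a collapsed column is already generated. I expect the bookkeeping of the two $(2)$-transitivity rules — horizontal versus vertical composition — to be the fiddliest part, since it is precisely the alternation between these that builds up all of $R(\alpha,\alpha)$ from the matrix generators, mirroring the remark that $\Delta(\theta,\theta)$ is the iterated, alternating transitive closure of $M(\theta,\theta)$.
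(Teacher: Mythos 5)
Note first that this paper contains no proof of Theorem~\ref{thm:mysdmeetcharacterization}: it is imported wholesale from Theorem 5.2 of \cite{MoorheadTaylorSupNil}, so your proposal has to stand on its own merits. Your easy inclusion $\Delta(\alpha,\alpha) \subseteq R(\alpha,\alpha)$ is fine, and you have assembled the right external ingredients (Kearnes--Szendrei/Lipparini neutrality, Theorem 4.9 of \cite{MoorheadTaylorSupNil}). But the core of the hard direction is logically inverted. You propose, assuming $\Delta(\alpha,\alpha) \neq R(\alpha,\alpha)$, to produce a matrix in $\Delta(\alpha,\alpha)$ with a constant column and non-constant opposite column, ``exhibiting a failure of hypercentralization and hence $[\alpha,\alpha]_H > \alpha$.'' That conclusion is impossible: the hypercommutator always satisfies $[\alpha,\alpha]_H \leq \alpha$ (the congruence $\delta = \alpha$ already hypercentralizes, since $\Delta(\alpha/\alpha,\alpha/\alpha)$ consists of constant matrices). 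A ``bad'' matrix in $\Delta(\alpha,\alpha)$ witnesses only $[\alpha,\alpha]_H > 0$, which, far from contradicting neutrality, is exactly what neutrality demands: $[\alpha,\alpha]_H = \alpha$ says precisely that no $\delta$ strictly below $\alpha$ kills all such matrices. What actually needs proving is a positive statement: under meet semidistributivity, $\Square[a][a][a][b] \in \Delta(\alpha,\alpha)$ for \emph{every} $(a,b) \in \alpha$ (from these, the generator $\Square[a][a][c][c]$ and $(2)$-transitivity assemble an arbitrary rectangle, as in the proof of Theorem~\ref{thm:newsdmeetchar}). Your proposal offers no mechanism for this, and the suggested device of working in $\mathbb{A}/[\alpha,\alpha]_H$ and lifting back is vacuous here: under neutrality that quotient collapses $\alpha$ entirely, so the image of $\Delta$ carries no information to lift.

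There is a second gap in your direction $(2) \Rightarrow (1)$. You pass from $[\alpha,\alpha]_H = \alpha$ to $[\alpha,\alpha]_{TC} = \alpha$ via Theorem 4.9 of \cite{MoorheadTaylorSupNil}, which requires the variety to be Taylor; but your justification that the variety is Taylor is that it is congruence meet semidistributive --- the very conclusion of this direction. Since in general only $[\alpha,\alpha]_{TC} \leq [\alpha,\alpha]_H$ holds, $H$-neutrality (which condition (2) does yield, since the matrices $\Square[a][a][a][b]$ remain bad in any quotient by a $\delta$ not containing $(a,b)$) does not by itself give $TC$-neutrality, so Kearnes--Szendrei cannot be invoked without first extracting the Taylor property from (2) by an independent argument. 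Both directions therefore rest on steps that are either circular or run the wrong way, and these are precisely the places where the cited Theorem 5.2 does its real work.
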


We can sharpen the second condition of Theorem \ref{thm:mysdmeetcharacterization} by restricting the congruences that we consider to just a single principle congruence in the $2$-generated free algebra. We first introduce notation for the horizontal and vertical relational product of a set of matrices. Let $S \leq A^{2^2}$. We write 
\begin{itemize}
    \item $H(S) := \left\{ \Square[a][b][c][d]: \exists e,f \left( \Square[a][b][e][f], \Square[e][f][c][d] \in S \right)\right\}$ and
    \item $V(S) := \left\{ \Square[a][b][c][d]: \exists e,f \left(\Square[e][b][f][d] , \Square[a][e][c][f] \in S \right)\right\}$
\end{itemize}
to denote the \emph{horizontal and vertical} relational product of $S$ with itself. Compositions of these operations can be denoted with the usual exponent notation and the reader can easily check that the following hold when $S$ is a $(2)$-reflexive relation on $A$.
\begin{equation}
S \subseteq H(S)  \label{eq:scontainedinH(s)}
\end{equation}
\begin{equation}
S \subseteq V(S) \label{eq:scontainedinV(s)}
\end{equation}
Using (\ref{eq:scontainedinH(s)}) and (\ref{eq:scontainedinV(s)}), it is also straightforward to check that the following holds for any $(2)$-reflexive and $(2)$-symmetric relation $S$. 
\begin{equation}
\Eq_2(S) = \bigcup_{n \geq 0} (V\circ H)^n(S) \label{eq:delta=eltc}
\end{equation}

Now let $\Var$ be a variety and let $\mathbb{F}_{\Var}(x,y)$ be the free algebra for $\Var$ generated by $x$ and $y$. We define

\[
    E_{\Var}(x,y) :=\Sg_{\mathbb{F}_{\Var}(x,y)^{2^2}}
    \left(
    \left\{
 \Square[x][x][x][x], \Square[y][y][y][y], \Square[x][y][x][y], \Square[y][x][y][x], \Square[x][x][y][y], \Square[y][y][x][x]
    \right\}
    \right).
\]
We will refer to the elements of $E_{\Var}(x,y)$ as the \emph{$(x,y)$-elementary matrices} for $\Var$.
Now we can state the following theorem.

\begin{theorem}\label{thm:newsdmeetchar}
    Let $\Var$ be a variety of algebras. The following are equivalent.
    \begin{enumerate}
        \item $\Var$ is congruence meet semidistributive,
         \item $\Square[x][x][x][y] \in \Delta(\gamma, \gamma)$, where $\gamma$ is the congruence of the two generated free algebra $\mathbb{F}_{\Var}(x,y)$ in $\Var$ generated by the pair $(x,y)$, and
        \item $\Square[x][x][x][y] \in \Cg_2(E_{\Var}(x,y)) = \bigcup_{n \geq 0} (V\circ H)^n(E_{\Var}(x,y))$
    \end{enumerate}
\end{theorem}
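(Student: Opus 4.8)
The plan is to establish the equivalences by proving $(1)\Rightarrow(2)$, $(2)\Leftrightarrow(3)$, and $(2)\Rightarrow(1)$. The implication $(1)\Rightarrow(2)$ is immediate from Theorem~\ref{thm:mysdmeetcharacterization}: if $\Var$ is congruence meet semidistributive then $\Delta(\gamma,\gamma)=R(\gamma,\gamma)$, and since the rows $(x,y),(x,x)$ and the columns $(x,x),(y,x)$ of $\Square[x][x][x][y]$ are all $\gamma$-related, this matrix is a $(\gamma,\gamma)$-rectangle and hence lies in $R(\gamma,\gamma)=\Delta(\gamma,\gamma)$.

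For $(2)\Leftrightarrow(3)$ I would first prove the identity $\Delta(\gamma,\gamma)=\Cg_2(E_{\Var}(x,y))$ and then the formula $\Cg_2(E_{\Var}(x,y))=\bigcup_{n\ge 0}(V\circ H)^n(E_{\Var}(x,y))$; together these show the two membership statements coincide. The formula follows from (\ref{eq:delta=eltc}) once I check that $E_{\Var}(x,y)$ is $(2)$-reflexive and $(2)$-symmetric: since the reflexivity and symmetry operations are coordinate maps that commute with the coordinatewise term operations, it suffices to verify closure on the six generating matrices, which is a direct computation. One also observes that horizontal and vertical relational products of subalgebras of $\mathbb{F}_{\Var}(x,y)^{2^2}$ are again subalgebras (each is a projection of a subalgebra of a finite power), so the increasing union $\bigcup_n(V\circ H)^n(E_{\Var}(x,y))$ is itself a subalgebra, hence compatible, and $\Cg_2$ therefore agrees with $\Eq_2$ on $E_{\Var}(x,y)$. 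The inclusion $\Cg_2(E_{\Var}(x,y))\subseteq\Delta(\gamma,\gamma)$ is then easy, as $\Delta(\gamma,\gamma)$ is a $(2)$-congruence containing each of the six elementary matrices.

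The reverse inclusion $\Delta(\gamma,\gamma)\subseteq\Cg_2(E_{\Var}(x,y))$ is the step I expect to be the main obstacle, since here the six fixed generators must $(2)$-congruence-generate the horizontal matrix $\Square[p][p][q][q]$ and the vertical matrix $\Square[p][q][p][q]$ for \emph{every} pair $(p,q)\in\gamma$, not merely for $(x,y)$. The idea is to exploit that $\gamma=\Cg(x,y)$ is principal: applying a unary polynomial $f$ coordinatewise to the elementary matrix $\Square[x][x][y][y]$ while feeding in the constant matrices $\Square[x][x][x][x]$ and $\Square[y][y][y][y]$ yields the horizontal matrix for the pair $(f(x),f(y))$, and a Maltsev chain witnessing $(p,q)\in\Cg(x,y)$ then assembles $\Square[p][p][q][q]$ from such pieces by horizontal $(2)$-transitivity, with $(2)$-symmetry used to orient the reversed links; the vertical matrices are obtained symmetrically. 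The care required lies in tracking which generator plays which role along the alternating Maltsev chain and in checking that each intermediate matrix is genuinely produced by a legitimate coordinatewise operation followed by a composition permitted by $(2)$-transitivity.

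Finally, for $(2)\Rightarrow(1)$ I would argue by substitution and then invoke Theorem~\ref{thm:mysdmeetcharacterization}. Fixing a finite derivation of $\Square[x][x][x][y]$ in $\bigcup_n(V\circ H)^n(E_{\Var}(x,y))$, for any $\mathbb{A}\in\Var$, any $\alpha\in\Con(\mathbb{A})$, and any $(a,b)\in\alpha$ the homomorphism $\mathbb{F}_{\Var}(x,y)\to\mathbb{A}$ with $x\mapsto a,\ y\mapsto b$ sends the six elementary matrices to matrices of $\Delta(\alpha,\alpha)$ and commutes with $V$ and $H$, so $\Square[a][a][a][b]\in\Delta(\alpha,\alpha)$; by $(2)$-symmetry one obtains all four corner defects (three equal entries, the fourth an $\alpha$-partner). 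A short argument upgrades these to all rectangles: horizontally composing two corner defects across a trivial middle column produces every matrix with a pair of adjacent equal corners, and horizontally composing two such matrices across a suitable common column produces an arbitrary rectangle of $R(\alpha,\alpha)$; closure of $\Delta(\alpha,\alpha)$ under horizontal $(2)$-transitivity keeps all of these inside $\Delta(\alpha,\alpha)$. Thus $\Delta(\alpha,\alpha)=R(\alpha,\alpha)$ for every congruence of every algebra in $\Var$, and Theorem~\ref{thm:mysdmeetcharacterization} delivers congruence meet semidistributivity.
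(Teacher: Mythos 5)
Your proposal is correct and takes essentially the same approach as the paper: $(1)\Rightarrow(2)$ by citing Theorem~\ref{thm:mysdmeetcharacterization}, $(2)\Leftrightarrow(3)$ via the identity $\Delta(\gamma,\gamma)=\Cg_2(E_{\Var}(x,y))$ together with Equation~(\ref{eq:delta=eltc}) (you spell out with Maltsev chains what the paper dismisses as ``easily seen''), and $(2)\Rightarrow(1)$ by pushing the derivation of $\Square[x][x][x][y]$ through substitution homomorphisms into $\Delta(\alpha,\alpha)$ and closing under $(2)$-transitivity. The only difference is bookkeeping in the last step: the paper horizontally composes the two corner defects $\Square[a][b][a][a]$ and $\Square[c][c][c][d]$ around the elementary generator $\Square[a][a][c][c]$, whereas you assemble an arbitrary rectangle from corner defects in two stages of composition --- the same idea.
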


\begin{proof}
    We first prove \emph{2.} holds if and only if \emph{3.} holds. This follows from the fact that $\Delta(\gamma, \gamma)$ is equal to $\Cg_2(E_{\Var}(x,y))$. Since the set of generators of $E_{\Var}(x,y)$ is a subset of the set of generators of $\Delta(\gamma, \gamma)$, clearly $\Cg_2(E_{\Var}(x,y)) \subseteq \Delta(\gamma, \gamma)$. On the other hand, the other generators of $\Delta(\gamma, \gamma)$ are easily seen to belong to the iterated transitive closure of 
   $E_{\Var}(x,y) $
    so $\Delta(\gamma, \gamma) \subseteq \Cg_2(E_{\Var}(x,y))$ as well. Finally, the equality 
    \[
    \Cg_2(E_{\Var}(x,y)) = \bigcup_{n \geq 0} (V\circ H)^n(E_{\Var}(x,y))
    \]
    follows from Equation (\ref{eq:delta=eltc}), since $E_{\Var}(x,y)$ is a $(2)$-tolerance of $\mathbb{F}_{\Var}(x,y)$.
    
    Now, \emph{1.} implies \emph{2.} follows from Theorem~\ref{thm:mysdmeetcharacterization}. So, suppose that \emph{2.} holds and let $\alpha$ be a congruence of an algebra $\mathbb{A} \in \Var$. Suppose that 
    \[
    \Square[a][b][c][d] \in R(\alpha, \alpha).
    \]
    Let $\Theta(a,b)$ and $\Theta(c,d)$ be the $\mathbb{A}$-invariant relations which are the images of $\Cg_2(E_{\Var}(x,y))$ under the respective mappings of $\mathbb{F}_{\Var}(x,y)$ to $\mathbb{A}$ which send $x$ to $a$ and $y$ to $b$ or $x$ to $c$ and $y$ to $d$. Clearly, both $\Theta(a,b) \subseteq \Delta(\alpha, \alpha)$ and $\Theta(c,d) \subseteq \Delta(\alpha, \alpha)$. Since we assume that \emph{2.} holds, we obtain that 
    \[
    \Square[a][a][a][b], \Square[c][c][c][d] \in \Delta(\alpha, \alpha).
    \]
    We also have the following matrix which is a generator of $\Delta(\alpha, \alpha)$:
    \[
    \Square[a][a][c][c] \in \Delta(\alpha, \alpha).
    \]
    Hence, we have the following three matrices (the first matrix is obtained by applying $(2)$-reflexivity):
   \[
    \Square[a][b][a][a], \Square[a][a][c][c] \Square[c][c][c][d] \in \Delta(\alpha, \alpha).
    \]
    Applying $(2)$-transitivity leads to the desired conclusion, i.e.\ that $\Delta(\alpha, \alpha)= R(\alpha, \alpha)$.
\end{proof}

Item \emph{3.} of Theorem~\ref{thm:newsdmeetchar} leads to a Maltsev condition that characterizes congruence meet semidistributivity. 
Since each element of $E(x,y)$ corresponds to some $6$-ary term of $\Var$ applied to the six generator matrices of $E(x,y)$, we can label each square witnessing condition \emph{3.} in Theorem~\ref{thm:newsdmeetchar} with a basic $6$-ary operation symbol and assert the obvious identities which guarantee a presentation of congruence meet semidistributivity. We denote by $\Sigma_n$ this package of identities. The reader can consult Figure \ref{fig:sigma2} for a diagram of the condition $\Sigma_2$.

\begin{figure}
    \centering
    \includegraphics[width=1\linewidth]{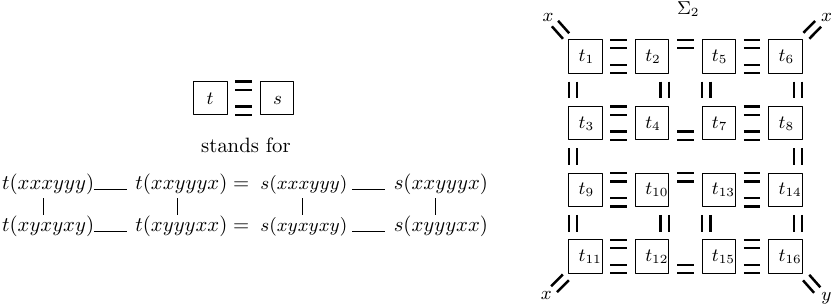}
    \caption{Diagram of the condition $\Sigma_2$}
    \label{fig:sigma2}
\end{figure}

\begin{theorem}
    A variety $\Var$ is congruence meet semidistributive if and only if there exists $n \geq 0$ and $6$-ary $\Var$-terms $t_1, \dots, t_{4^n}$ so that $\Var$ satisfies the identities of $\Sigma_n$ asserted for the $t_1, \dots, t_{4^n}$.
\end{theorem}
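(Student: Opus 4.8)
The plan is to show that the displayed Maltsev condition is nothing more than a syntactic transcription of item~3 of Theorem~\ref{thm:newsdmeetchar}, so that the whole statement reduces to that equivalence together with the book-keeping needed to read $\Sigma_n$ off the operators $H$ and $V$. By Theorem~\ref{thm:newsdmeetchar}, $\Var$ is congruence meet semidistributive if and only if $\Square[x][x][x][y]$ lies in $\bigcup_{n \geq 0}(V \circ H)^n(E_{\Var}(x,y))$ inside $\mathbb{F}_{\Var}(x,y)^{2^2}$, equivalently if and only if there is some $n$ with $\Square[x][x][x][y] \in (V \circ H)^n(E_{\Var}(x,y))$. So it suffices, for each fixed $n \geq 0$, to prove that $\Square[x][x][x][y] \in (V \circ H)^n(E_{\Var}(x,y))$ holds in the free algebra exactly when $\Var$ has $6$-ary terms $t_1, \dots, t_{4^n}$ satisfying $\Sigma_n$.

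First I would unwind $(V\circ H)^n$ as a balanced binary tree of relational products: each application of $H$ glues two matrices along a shared middle column and each application of $V$ glues two matrices along a shared middle row, so one pass of $V \circ H$ combines four matrices and $(V \circ H)^n$ combines $4^n$ matrices at the leaves. A witness that $\Square[x][x][x][y]$ lies in $(V \circ H)^n(E_{\Var}(x,y))$ is then a labelling of the $4^n$ leaves by elements of $E_{\Var}(x,y)$, subject to three kinds of constraint: at every $H$-node the right column of the left child agrees with the left column of the right child; at every $V$-node the bottom row of the top child agrees with the top row of the bottom child; and the outer boundary of the root equals $\Square[x][x][x][y]$. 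A short induction on the tree shows that every internal boundary entry is in fact a corner of some leaf, so that no auxiliary existentially quantified entries need to be named separately: the shared middle columns and rows are forced to be shared corner values of adjacent leaves.

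Next I would use the fact that $E_{\Var}(x,y)$ is generated, as a subalgebra of $\mathbb{F}_{\Var}(x,y)^{2^2}$, by the six elementary matrices. Every leaf is therefore $t_i$ applied coordinatewise to those six generators for some $6$-ary term $t_i$, so its four corners are the explicit terms obtained by substituting the corresponding corner entries of the generators into $t_i$; for instance its northwest corner is $t_i(x,y,y,x,x,y)$, and the other three corners are the analogous substitutions. Because the shared entries in the constraints above are exactly these corner terms of adjacent leaves, each gluing constraint and each boundary constraint becomes an equation between two such terms in the variables $x,y$. This collection of equations is precisely the package $\Sigma_n$ attached to $t_1, \dots, t_{4^n}$.

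The two directions then follow from the defining property of the free algebra, that an equation in $x,y$ holds in $\mathbb{F}_{\Var}(x,y)$ exactly when it is an identity of $\Var$. If $\Var$ satisfies $\Sigma_n$, interpreting each $t_i$ on the six generators produces leaf matrices in $E_{\Var}(x,y)$ whose boundaries match along the tree by those identities, placing $\Square[x][x][x][y]$ in $(V\circ H)^n(E_{\Var}(x,y))$ and hence, by Theorem~\ref{thm:newsdmeetchar}, making $\Var$ congruence meet semidistributive. Conversely, a free-algebra witness yields terms $t_i$ whose gluing and boundary equalities hold in $\mathbb{F}_{\Var}(x,y)$ and are therefore identities of $\Var$, so $\Var$ satisfies $\Sigma_n$. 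I expect the only real obstacle to be the careful indexing of the tree, namely verifying that the leaf count is exactly $4^n$ and that the three families of constraints translate into precisely the identities of $\Sigma_n$; this is routine but notation-heavy, and the diagram of $\Sigma_2$ in Figure~\ref{fig:sigma2} is meant to make the transcription transparent.
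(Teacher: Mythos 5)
Your proposal is correct and is essentially the paper's own argument: both directions reduce to item~3 of Theorem~\ref{thm:newsdmeetchar}, with the $\Sigma_n$-identities read off from the $4^n$-leaf diagram of matrices in $E_{\Var}(x,y)$ via the correspondence between elements of the free algebra and identities of $\Var$. Your write-up just makes explicit the tree-unwinding and corner bookkeeping (e.g.\ the northwest corner $t_i(x,y,y,x,x,y)$) that the paper compresses into ``label each matrix and assert the obvious identities'' with reference to Figure~\ref{fig:sigma2}.
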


\begin{proof}
    If there exists an $n$ and such terms $t_1, \dots, t_{4^n}$ in $\Var$, then 
    \[
    \Square[x][x][x][y] \in \Delta(\gamma, \gamma),
    \]
    where $\gamma$ is the principle congruence generated by $(x,y)$ of the free algebra $\mathbb{F}_{\Var}(x,y)$, so $\Var $ is congruennce meet semidistributive by \emph{2.} implies \emph{1.} of Theorem~\ref{thm:newsdmeetchar}. On the other hand, if $\Var$ is congruence meet semidistributive, then by \emph{3.} of Theorem~\ref{thm:newsdmeetchar} there exists $n$ so that 
    \[
    \Square[x][x][x][y] \in (V\circ H)^n(E_{\Var}(x,y)) \subseteq \Delta(\gamma, \gamma).
    \]
    The above condition will produce a diagram with $4^n$-many matrices like the one shown in Figure~\ref{fig:sigma2}.
    Labeling each matrix of $E(x,y)$ by the $6$-ary $\Var$-term which generates it produces the terms which witness $\Sigma_n$. 
\end{proof}

Notice that Equations (\ref{eq:scontainedinH(s)}) and (\ref{eq:scontainedinV(s)}) guarantee that 
\[
(V\circ H)^n(E_{\Var}(x,y)) \subseteq (V\circ H)^{n+1}(E_{\Var}(x,y)),
\]
so any variety that has terms satisfying $\Sigma_n$ also has terms satisfying $\Sigma_{n+1}$. Hence, the conditions 
\[
\Sigma_1, \dots, \Sigma_n, \dots
\]
determine a descending chain in the interpretability lattice for varieties and therefore comprise a Maltsev condition for congruence meet semidistributivity. 
The remainder of this paper is dedicated to showing that this chain does not collapse, that is, there does not exist $N$ so that a variety is congruence meet semidistributive if and only if it satisfies $\Sigma_N$.

\section{Congruence meet semidistributivity is not strong}\label{sec:notstrong}

Let $l \geq 1$ and let $\tau_l = \{s_0, s_1, , \dots, s_{2l}, s_{2l+1} \}$ be a set of $4$-ary operation symbols. We denote by $\Lambda_l$ the following package of $\tau_l$ identities.

\begin{enumerate}
    \item $s_0(xxxx) = s_1(xxxx) = \dots =s_{2l}(xxxx) = s_{2l+1}(xxxx)=x$ 
    \item $s_0(yxxx) = x$ 
    \item $s_0(xyxx)= s_1(xyxx)$ and $s_0(yyxx)= s_1(yyxx)$
    \item $s_{2i+1}(xxyx)= s_{2(i+1)}(xxyx)$ and $s_{2i+1}(yxyx)= s_{2(i+1)}(yxyx)$ for all $0 \leq i < l$
    \item $s_{2i}(xyxx)= s_{2i+1}(xyxx)$ and $s_{2i}(yyxx)= s_{2i+1}(yyxx)$ for all $1 \leq i < l$
    \item $s_{2l}(xyxx)= s_{2+1}(xyxx)$
    \item $s_{2l}(yyxx)= s_{2l+1}(yyxx)$
    \item $s_{2l+1}(xyyy)= x$ 
\end{enumerate}

Let $\mathcal{W}_l$ be the variety of algebras which satisfy the identities presented by $\Lambda_l$. These identities are designed so that 
\[
\Square[x][x][x][y] \in \Delta(\gamma, \gamma),
\]
where, as usual, $\gamma$ is the congruence of $\mathbb{F}_{\mathcal{W}_l}(x,y)$ generated by the pair $(x,y)$. The reader can consult Figure~\ref{fig:horconditions} for an illustration of the $(x,y)$-elementary matrices which witness this fact. Hence by Theorem~\ref{thm:newsdmeetchar}, each of the $\mathcal{W}_l$ is a congruence meet semidistributive variety. 

\begin{figure}
    \centering
    \includegraphics[width=.9\linewidth]{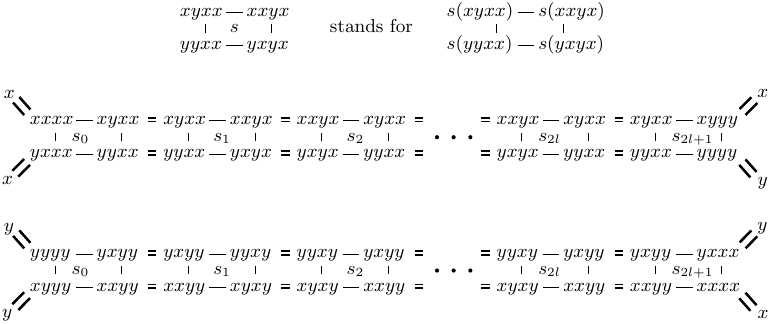}
    \caption{The condition $\Lambda_l$}
    \label{fig:horconditions}
\end{figure}

For each $0 \leq i \leq 2l+1$, we set $\tau_{l,i}  = \tau_l \setminus \{s_i\}$. We denote by $\Lambda_{l,i}$ the package of identities obtained from $\Lambda_l$ which only involve operation symbols belonging to $\tau_{l,i}$. We denote by $\mathcal{W}_{l,i}$ the variety of algebras that model the identities from $\Lambda_{l,i}$. 

\begin{lemma}\label{lem:projectionsinterpretation}
    Let $l \geq 1$, $0 \leq i \leq 2l+1$, and let $A$ be a nonempty set. The package of identities $\Lambda_{l,i}$ is modeled by an algebra with universe $A$ and whose basic operations are each a projection.
\end{lemma}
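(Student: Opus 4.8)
The plan is to exhibit, for each $i$, an explicit assignment of projections to the surviving operation symbols $s_j$ (those with $j \neq i$) and to check that every identity of $\Lambda_{l,i}$ holds. Interpreting a $4$-ary symbol $s_j$ as a projection amounts to choosing a coordinate $p_j \in \{1,2,3,4\}$ and setting $s_j(a_1,a_2,a_3,a_4) = a_{p_j}$ on $A$; the first identity of $\Lambda_l$ is then automatic, since every projection fixes the constant tuple. The structural observation that drives everything is that, apart from the first identity, $\Lambda_l$ consists only of \emph{boundary} identities (the second, involving $s_0$ alone, and the eighth, involving $s_{2l+1}$ alone) together with \emph{linking} identities (the third through seventh), each of which equates two \emph{consecutive} symbols $s_k$ and $s_{k+1}$ on a fixed pair of tuples.

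First I would record what removing $s_i$ does to the package. The only identities of $\Lambda_l$ mentioning $s_i$ are the two links joining $s_{i-1}$ to $s_i$ and $s_i$ to $s_{i+1}$, together with the boundary identity at $i$ in case $i \in \{0,\,2l+1\}$. Deleting these leaves the symbols partitioned into the two blocks $\{s_0, \dots, s_{i-1}\}$ and $\{s_{i+1}, \dots, s_{2l+1}\}$, with \emph{no} surviving identity relating a symbol of one block to a symbol of the other. I would then define the algebra by letting $s_j$ be the projection onto the second coordinate for every $j < i$, and the projection onto the first coordinate for every $j > i$.

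The verification is then routine. Inside each block all operations are interpreted as the same projection, so every surviving linking identity $s_k(\cdots) = s_{k+1}(\cdots)$ holds trivially, both sides being literally the same coordinate of the same tuple. For the boundary identities: if $i \neq 0$ the second identity survives and concerns $s_0$, which projects onto the second coordinate, so $s_0(y,x,x,x) = x$; if $i \neq 2l+1$ the eighth identity survives and concerns $s_{2l+1}$, which projects onto the first coordinate, so $s_{2l+1}(x,y,y,y) = x$. In the two extreme cases $i = 0$ and $i = 2l+1$ one block is empty and the corresponding boundary identity has itself been deleted, so nothing further is required, and the first identity is satisfied by all projections.

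The only genuine content, and the point to get exactly right, is the structural claim that no surviving identity bridges the two blocks, i.e.\ that every linking identity joins consecutive symbols so that deleting a single $s_i$ truly disconnects the chain. This is precisely what fails for the full package $\Lambda_l$: there both boundary identities are present and force $p_0 \neq 1$ and $p_{2l+1} = 1$, while every linking identity is readily checked to preserve the property ``$p_k = 1$''; propagating this along the intact chain from $s_0$ to $s_{2l+1}$ yields a contradiction, so $\Lambda_l$ itself admits no projection model. Removing any one symbol breaks exactly this propagation, which is what the lemma exploits.
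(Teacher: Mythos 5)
Your proof is correct and takes essentially the same approach as the paper: both split the surviving symbols into the blocks $\{s_0,\dots,s_{i-1}\}$ and $\{s_{i+1},\dots,s_{2l+1}\}$ and interpret each block by a single fixed projection chosen to satisfy the surviving boundary identity, the only (immaterial) difference being that the paper uses the third projection on the lower block where you use the second. Your closing observation that the full package $\Lambda_l$ admits no projection model is a nice bonus but is not needed for the lemma.
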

\begin{proof}

    We refer again to Figure~\ref{fig:horconditions}. If $i=0$, then all other operations can be interpreted as the first projection on $A$. If $i=2l+1$, then all other operations can be interpreted as the third projection on $A$. If $1 \leq i \leq 2l$, then the operation symbols $t_j$ for $0 \leq j <i$ can be interpreted as the third projection on $A$ and the others can be interpreted as the first projection on $A$. 
\end{proof}

Let us informally describe the idea of the proof that the chain $\Sigma_1, \dots, \Sigma_n, \dots$ does not terminate in some $\Sigma_N$. We pick a large $l$ with respect to $4^{N}$ (see Theorem~\ref{thm:notstrong}). If every congruence meet semidistributive variety has $\Sigma_N$-terms, then $\mathcal{W}_l$ will have $\Sigma_N$-terms and these terms will be witnessed by a collection of $4^N$-many $(x,y)$-elementary matrices belonging to $E_{\mathcal{W}_l}(x,y)$. Since we picked a large $l$, not all basic operation symbols can be used for the roots of the term trees which generate each of the $4^N$-many $(x,y)$-elementary squares. Applying Lemma~\ref{lem:projectionsinterpretation} will allow us to reduce the term complexity for each matrix by interpreting these outer basic operation symbols as projections. The critical step of the argument is to pass from the deductions we can make in $\mathcal{W}_l$ about term equality to the deductions we can make in some $\mathcal{W}_{l,i}$ (see Lemma~\ref{lemma:termsarefreegenerators}). The result then follows from an induction on the complexity of terms which generate $E_{\mathcal{W}_l}(x,y)$. Of course, this kind of argument will not work in general and the fact that it works here relies on the good behavior of the $\mathcal{W}_l$-terms.

Our task now is to build the $2$-generated free algebra for $\mathcal{W}_l$. Actually, we will recursively build a set of $\tau_l$-terms which all together will comprise a set of distinct representatives for the equivalence classes of the fully invariant congruence by which the term algebra is factored to produce the free algebra. At each stage we will also define the action of the $\tau_l$ basic operations which satisfy all possible instances of $\Lambda_l$-identities, thereby producing a $\tau_l$-algebra which belongs to $\mathcal{W}_l$. 

\begin{figure}[!htb]
\centering
    \begin{tabular}{c|cccccc}
     & $s_0^{\mathbb{F}_l^1}$&$s_1^{\mathbb{F}_l^1}$ & $s_2^{\mathbb{F}_l^1}$ & \dots & $s_{2l}^{\mathbb{F}_l^1}$ & $s_{2l+1}^{\mathbb{F}_l^1}$ \\
         \hline
        $xxxx$ &$x$ & $x$ &$x$ & \dots &$x$ & $x$\\
        $xxxy$ &$s_0(xxxy)$ & $s_1(xxxy)$& $s_2(xxxy)$ & \dots & $s_{2l}(xxxy)$& $s_{2l+1}(xxxy)$ \\
        $xxyx$ &$s_0(xxyx)$ & $s_1(xxyx)$& $\leftarrow \mathbf{s_1(xxyx)}$ & \dots & $s_{2l}(xxyx)$& $s_{2l+1}(xxyx)$ \\
        $xxyy$ &$s_0(xxyy)$ & $\leftarrow \mathbf{s_0(xxyy)}$& $s_2(xxyy)$ & \dots & $s_{2l}(xxyy)$& $\leftarrow \mathbf{s_{2l}(xxyy)}$ \\
        $xyxx$ &$s_0(xyxx)$ & $\leftarrow \mathbf{s_0(xyxx)}$& $s_2(xyxx)$ & \dots & $s_{2l}(xyxx)$& $\leftarrow \mathbf{s_{2l}(xyxx)}$ \\
        $xyxy$ &$s_0(xyxy)$ & $s_1(xyxy)$& $\leftarrow \mathbf{s_1(xyxy)}$ & \dots & $s_{2l}(xyxy)$& $s_{2l+1}(xyxy)$ \\
        $xyyx$ &$s_0(xyyx)$ & $s_1(xyyx)$& $s_2(xyyx)$ & \dots & $s_{2l}(xyyx)$& $s_{2l+1}(xyyx)$ \\
        $xyyy$ &$y$ & $s_1(xyyy)$& $s_2(xyyy)$ & \dots & $s_{2l}(xyyy)$& $x$ \\
        $yxxx$ &$x$ & $s_1(yxxx)$& $s_2(yxxx)$ & \dots & $s_{2l}(yxxx)$& $y$ \\
        $yxxy$ &$s_0(yxxy)$ & $s_1(yxxy)$& $s_2(yxxy)$ & \dots & $s_{2l}(yxxy)$& $s_{2l+1}(yxxy)$ \\
        $yxyx$ &$s_0(yxyx)$ & $s_1(yxyx)$& $\leftarrow \mathbf{s_1(yxyx)}$ & \dots & $s_{2l}(yxyx)$& $s_{2l+1}(yxyx)$ \\
        $yxyy$ &$s_0(yxyy)$ & $\leftarrow \mathbf{s_0(yxyy)}$& $s_2(yxyy)$ & \dots & $s_{2l}(yxyy)$& $\leftarrow \mathbf{s_{2l}(yxyy)}$ \\
        $yyxx$ &$s_0(yyxx)$ & $\leftarrow \mathbf{s_0(yyxx)}$& $s_2(yyxx)$ & \dots & $s_{2l}(yyxx)$& $\leftarrow \mathbf{s_{2l}(yyxx)}$ \\
        $yyxy$ &$s_0(yyxy)$ & $s_1(yyxy)$& $\leftarrow \mathbf{s_1(yyxy)}$ & \dots & $s_{2l}(yyxy)$& $s_{2l+1}(yyxy)$ \\
        $yyyx$ &$s_0(yyyx)$ & $s_1(yyyx)$& $s_2(yyyx)$ & \dots & $s_{2l}(yyyx)$& $s_{2l+1}(yyyx)$ \\
        $yyyy$ & $y$&$y$ &$y$ &\dots &$y$ &$y$ \\
    \end{tabular}
    \caption{The operation definitions for the partial $\tau_l$-algebra $\mathbb{F}_l^1$}
    \label{fig:heightonetermx}

\end{figure}

\begin{figure}[!htb]
\centering
\begin{tabular}{c}
Input tuples $(a,b,c,d) \in F^{k}_l$ satisfy $\{a,b,c,d \} \cap (F^{k}_l \setminus F^{k-1}_l) \neq \emptyset$ 
\end{tabular}

    \begin{tabular}{c|cccccc}
    \hline
     & $s_0^{\mathbb{F}_l^{k+1}}$&$s_1^{\mathbb{F}_l^{k+1}}$ & $s_2^{\mathbb{F}_l^{k+1}}$ & \dots & $s_{2l}^{\mathbb{F}_l^{k+1}}$ & $s_{2l+1}^{\mathbb{F}_l^{k+1}}$ \\
         \hline
        $pppp$ &$p$ & $p$ &$p$ & \dots &$p$ & $p$\\
        $pppq$ &$s_0(pppq)$ & $s_1(pppq)$& $s_2(pppq)$ & \dots & $s_{2l}(pppq)$& $s_{2l+1}(pppq)$ \\
        $ppqp$ &$s_0(ppqp)$ & $s_1(ppqp)$& $\leftarrow \mathbf{s_1(ppqp)}$ & \dots & $s_{2l}(ppqp)$& $s_{2l+1}(ppqp)$ \\
        $ppqq$ &$s_0(ppqq)$ & $\leftarrow \mathbf{s_0(ppqq)}$& $s_2(ppqq)$ & \dots & $s_{2l}(ppqq)$& $\leftarrow \mathbf{s_{2l}(ppqq)}$ \\
        $pqpp$ &$s_0(pqpp)$ & $\leftarrow \mathbf{s_0(pqpp)}$& $s_2(pqpp)$ & \dots & $s_{2l}(pqpp)$& $\leftarrow \mathbf{s_{2l}(pqpp)}$ \\
        $pqpq$ &$s_0(pqpq)$ & $s_1(pqpq)$& $\leftarrow \mathbf{s_1(pqpq)}$ & \dots & $s_{2l}(pqpq)$& $s_{2l+1}(pqpq)$ \\
        $pqqp$ &$s_0(pqqp)$ & $s_1(pqqp)$& $s_2(pqqp)$ & \dots & $s_{2l}(pqqp)$& $s_{2l+1}(pqqp)$ \\
        $pqqq$ &$q$ & $s_1(pqqq)$& $s_2(pqqq)$ & \dots & $s_{2l}(pqqq)$& $p$ \\
    \end{tabular}
    
\vspace{1cm}
\begin{tabular}{c}
Input tuples $(a,b,c,d) \in F^{k}_l$ satisfy $\{a,b,c,d \} \cap (F^{k}_l \setminus F^{k-1}_l) \neq \emptyset $ and 
$|\{a,b,c,d\} |\geq 3$
\end{tabular}

    \begin{tabular}{c|cccccc}
    \hline
     & $s_0^{\mathbb{F}_l^{k+1}}$&$s_1^{\mathbb{F}_l^{k+1}}$ & $s_2^{\mathbb{F}_l^{k+1}}$ & \dots & $s_{2l}^{\mathbb{F}_l^{k+1}}$ & $s_{2l+1}^{\mathbb{F}_l^{k+1}}$ \\
         \hline
        $abcd$ &$s_0(abcd)$ & $s_1(abcd)$& $s_2(abcd)$ & \dots & $s_{2l}(abcd)$& $s_{2l+1}(abcd)$ \\
    \end{tabular}
\caption{The operation definitions for the partial $\tau_l$-algebra $\mathbb{F}_l^{k+1}$}
    \label{fig:recursivealgebradef2}
\end{figure}

For the basis of the recursion, we set $F_l^0 = \{x,y \}$ and $F_l^1$ all $\tau_l$-terms which appear as entries in the table provided in Figure~\ref{fig:heightonetermx}. We define a partial $\tau_l$-algebra $\mathbb{F}_l^1$, where each basic operation symbol $r \in \tau_l$ interprets as a partial operation 
\[
r^{\mathbb{F}_l^1}: (F_1^0)^4 \to F_l^1
\]
as specified by the table. Note that we indicate with bold and an arrow when one of the `connecting' identities is used (identities 3.-7. in the definition of $\Lambda_l$ given at the beginning of the section).

Now we proceed recursively and specify a sequence of sets $F^0_l \subseteq F^1_l \subseteq  \dots \subseteq F^{k-1}_l \subseteq  F^k_l \dots$, where each $F^k_l$ is the domain of a partial $\tau_l$-algebra $\mathbb{F}_l^k$ with all $\tau_l$ operations defined on $(F^{k-1}_l)^4$, for every $k \geq 1$. Given the partial $\tau_l$-algebra $\mathbb{F}^k_l$, the partial $\tau_l$-algebra $\mathbb{F}^{k+1}_l$ is defined by extending the operation $r^{\mathbb{F}^{k}_l}$ to $(F^k_l)^l$ and defining $F^{k+1}$ to be union of $F^{k}_l$ with a set comprised of some appropriately chosen set of terms which guarantee that all $\Lambda_l$-identities are satisfied. Indeed, this is accomplished with the data provided in the tables appearing in Figure~\ref{fig:recursivealgebradef2}. The set $F^{k+1}_l$ is taken to be the union of $F^k_l$ with the set of all terms which appear in one of the operation tables given in the figure and each basic $\tau_l$-operation is defined by extending its definition in $\mathbb{F}^k_l$ as specified in the tables given in the figure.  

We now define $F_l = \bigcup_{1 \leq k} F_l^k$ to be the domain of the $\tau_l$-algebra $\mathbb{F}_l$, where the image of a tuple $(a,b,c,d)$ under a basic $\tau_l$-operation is obtained by consulting its image in $\mathbb{F}^k_l$, with $k$ minimal so that $\{a,b,c,d\} \in F^k_l$. 

\begin{lemma}\label{lem:freealgebra}
    Let $l \geq 1$. The algebra $\mathbb{F}_l$ belongs to $\mathcal{W}_l$ and is freely generated by the set $\{x,y\}$.  
\end{lemma}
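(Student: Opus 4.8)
The plan is to exhibit $\mathbb{F}_l$ as a concrete copy of $\mathbb{F}_{\mathcal{W}_l}(x,y)$ by establishing the two halves of the statement through the term algebra. Write $\mathbb{T}$ for the term algebra of type $\tau_l$ on $\{x,y\}$ and let $e\colon \mathbb{T} \to \mathbb{F}_l$ be the evaluation homomorphism $t \mapsto t^{\mathbb{F}_l}$. This map is surjective because every element of $F_l = \bigcup_k F_l^k$ is, by an immediate induction on $k$, the value of a $\tau_l$-term in $x,y$; the same induction records that $\{x,y\}$ generates $\mathbb{F}_l$. Let $\theta_{\mathcal{W}_l} \in \Con(\mathbb{T})$ be the fully invariant congruence whose quotient is $\mathbb{F}_{\mathcal{W}_l}(x,y)$, i.e.\ the set of pairs $(u,v)$ with $\mathcal{W}_l \models u \approx v$. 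The whole lemma reduces to the single equality $\ker e = \theta_{\mathcal{W}_l}$: the inclusion $\theta_{\mathcal{W}_l} \le \ker e$ amounts to $\mathbb{F}_l \models \Lambda_l$, hence to $\mathbb{F}_l \in \mathcal{W}_l$, and together with the reverse inclusion it yields $\mathbb{F}_l \cong \mathbb{T}/\theta_{\mathcal{W}_l} = \mathbb{F}_{\mathcal{W}_l}(x,y)$, which is freely generated by $\{x,y\}$.

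For the first inclusion ($\mathbb{F}_l \models \Lambda_l$) I would verify each identity of $\Lambda_l$ directly against the operation tables of Figures~\ref{fig:heightonetermx} and~\ref{fig:recursivealgebradef2}. Every $\Lambda_l$-identity has depth one and at most two variables, so checking it means evaluating a single basic operation on a $4$-tuple drawn from $F_l$; substituting elements $a,b \in F_l$ produces a tuple with at most two distinct values, and the minimal level at which this is computed falls under the idempotent row $pppp$ or the two-value block of the recursive tables, which are laid out precisely so that identities 1.--8.\ hold on every such substitution. The point requiring care is that one identity contributes two table entries through the two bijections of its universal variables onto the generators: for instance identity 2., $s_0(yxxx)=x$, also forces $s_0(xyyy)=y$, matching both corresponding entries, so the verification must run over all value-assignments and not merely $x \mapsto x$, $y \mapsto y$.

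For the reverse inclusion $\ker e \le \theta_{\mathcal{W}_l}$ I would prove the key claim that $(t, e(t)) \in \theta_{\mathcal{W}_l}$ for every term $t$, viewing $e(t) \in F_l$ as a term of $\mathbb{T}$. This goes by structural induction: the base case $t \in \{x,y\}$ is trivial, and for $t = s_i(t_1,\dots,t_4)$ the induction hypothesis together with compatibility of $\theta_{\mathcal{W}_l}$ gives $(t,\, s_i(e(t_1),\dots,e(t_4))) \in \theta_{\mathcal{W}_l}$, after which it remains to see that the table value $e(t) = s_i^{\mathbb{F}_l}(e(t_1),\dots,e(t_4))$ is $\theta_{\mathcal{W}_l}$-related to the formal term $s_i(e(t_1),\dots,e(t_4))$. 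This last fact is read off the tables entry by entry: a non-bold entry is literally the formal term, so there is nothing to check, while each bold/arrow entry is a genuine instance of one of the identities 1.--8. Once the claim holds, $e(t) = e(t')$ routes through the common normal form to give $(t,t') \in \theta_{\mathcal{W}_l}$, whence $\ker e \le \theta_{\mathcal{W}_l}$.

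The main obstacle is the bookkeeping that makes both inclusions simultaneously correct, namely confirming that the recursive tables are at once \emph{exhaustive} (every substitution instance of every $\Lambda_l$-identity is honored, which powers $\mathbb{F}_l \models \Lambda_l$) and \emph{tight} (every identification a table performs is a consequence of $\Lambda_l$, which powers $\ker e \le \theta_{\mathcal{W}_l}$). The delicate cases are exactly the ones the construction isolates: the endpoint collapses coming from identities 2.\ and 8.\ under both generator-assignments; the observation that a tuple with three or more distinct values can never be a substitution instance of a two-variable identity pattern, so it is genuinely unconstrained by $\Lambda_l$ and may be sent to a fresh normal form; and the consistency of the ``new element'' condition governing when a tuple first receives a value, so that the partial algebras $\mathbb{F}_l^k$ nest into a well-defined total algebra $\mathbb{F}_l$ and the listed terms are pairwise distinct representatives of the $\theta_{\mathcal{W}_l}$-classes.
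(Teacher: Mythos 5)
Your proposal is correct, but it establishes the freeness half of the lemma by a genuinely different mechanism than the paper. The two arguments agree in content on the membership half: like you, the paper checks $\mathbb{F}_l \models \Lambda_l$ directly against the tables of Figures~\ref{fig:heightonetermx} and~\ref{fig:recursivealgebradef2}, using that any substitution instance of a $\Lambda_l$-identity evaluates a tuple with at most two distinct entries and therefore falls under a row designed for it. For freeness, however, the paper verifies the universal mapping property directly: by induction on the level $k$ it shows that every map $\phi\colon \{x,y\}\to A$ with $\mathbb{A}\in\mathcal{W}_l$ extends to a partial $\tau_l$-homomorphism $\phi_k\colon \mathbb{F}^k_l\to\mathbb{A}$ (the homomorphism property holding precisely because each table entry differs from the formal term by an identity that $\mathbb{A}$ satisfies), and then takes $\overline{\phi}=\bigcup_{k}\phi_k$. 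You instead identify $\mathbb{F}_l$ with $\mathbb{T}/\theta_{\mathcal{W}_l}$ by computing the kernel of the evaluation map $e$, the key step being the structural induction showing $(t,e(t))\in\theta_{\mathcal{W}_l}$ for every term $t$, i.e.\ that every identification the tables perform is a one-step consequence of $\Lambda_l$. Your route is purely syntactic---it never quantifies over algebras of $\mathcal{W}_l$---and it literally realizes the paper's announced plan of exhibiting $F_l$ as a set of distinct representatives of the classes of the fully invariant congruence; it also isolates the ``tightness'' of the tables, the same phenomenon that Lemma~\ref{lemma:termsarefreegenerators} later exploits. The paper's route buys explicit extending homomorphisms, avoids the detour through the term algebra and the isomorphism theorem, and its level-by-level induction tracks the recursive definition of $\mathbb{F}_l$ more closely. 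Both proofs ultimately rest on the same two facts you call exhaustiveness and tightness.

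One small inaccuracy, harmless to your argument: the inclusion $\theta_{\mathcal{W}_l}\leq\ker e$ does not ``amount to'' $\mathbb{F}_l\models\Lambda_l$; by itself it only asserts that binary $\mathcal{W}_l$-identities hold at the pair of generators, not at arbitrary tuples. The implication you actually use---that $\mathbb{F}_l\models\Lambda_l$ implies $\theta_{\mathcal{W}_l}\leq\ker e$---is the valid direction, and since the full equality $\ker e=\theta_{\mathcal{W}_l}$ yields $\mathbb{F}_l\cong\mathbb{F}_{\mathcal{W}_l}(x,y)\in\mathcal{W}_l$ in any case, your reduction of the whole lemma to that equality stands.
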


\begin{proof}
    The theorem will follow from the following claim.

    \begin{claim}
        Let $k \geq 1$. The following hold.
        \begin{enumerate}
            \item The partial $(\tau_l)$-algebra $\mathbb{F}_l^k$ satisfies all $\Lambda_l$-identities over $F^{k-1}_l \subseteq F^k_l$.
            \item Each $\phi: \{x,y\} \to A$, for $\mathbb{A} \in \mathcal{W}_l$ extends to a partial $\tau_l$-algebra homomorphism $\phi_k: \mathbb{F}^k_l \to \mathbb{A}$. 
            
        \end{enumerate}

    \end{claim}
    \begin{claimproof}
        The proof of the claim proceeds inductively on $k \geq1$. We first establish the basis of the induction, where $k=1$. The reader can consult Figure~\ref{fig:heightonetermx} to assure themselves that $\mathbb{F}^1_l$ satisfies all $\Lambda_l$-identities over $F^0_l = \{x,y\}$, so \emph{1.} holds. To establish \emph{2.}, take an algebra $\mathbb{A} \in \mathcal{W}_l$ and a mapping $\phi: \{x,y\} \to A$. We define for a term $p(x,y) \in F^1_l$ the mapping.
        \[
        \phi(p(x,y))_1 = 
            p^{\mathbb{A}}(\phi(x), \phi(y)).
        \]
     Since $\mathbb{A}$ is assumed to model the identities in $\Lambda_l$, it follows that $\phi_k$ is a homomorphism of partial $\tau_l$-algebras that extends $\phi$, since only the $\Lambda_l$-identities are used to define the basic $\tau_l$-operations in the table in Figure~\ref{fig:heightonetermx}.

     Now suppose the claim holds for $k \geq 1$. The inductive argument is essentially identical to the basis. It follows from the definition of $\mathbb{F}^{k+1}$ and the inductive assumption that all $\Lambda_l$ identities are satisfied in $\mathbb{F}^{k+1}_l$ for tuples with entries ranging over $F^{k-1}_l \subseteq F^{k+1}_l$. Additionally, it follows from the tables given in Figure~\ref{fig:recursivealgebradef2} that the $\Lambda_l$-identities are also satisfied for all other tuples with entries ranging over $F^{k}_l$. So, \emph{1.} of the claim is established. 

     To establish \emph{2.}, take an algebra $\mathbb{A} \in \mathcal{W}_l$ and a mapping $\phi: \{x,y\} \to A$. We inductively suppose that there exists a $\phi_k: \mathbb{F}^k_l \to \mathbb{A}$ which extends $\phi$ and is a partial $\tau_l$-homomorphism. We define for a term $p(x,y) \in F^{k+1}_l$ the following mapping.
        \[
        \psi(p(x,y))_{k+1} = 
        \begin{cases}
            \phi_k(p(x,y)) &\text{if $p(x,y) \in F^{k}_l$, otherwise}\\
            r^{\mathbb{A}}(\phi_k(a), \phi_k(b), \phi_k(c), \phi_k(d)) &\text{for $r \in \tau_l$ such that $p(x,y) = r(a,b,c,d)$. }\\
        \end{cases}
        \]
        Since $\phi_k$ is assumed to satisfy the homomorphism property over its domain, we need only verify that $\phi_{k+1}$ satisfies the homomorphism property for terms $p(x,y) \in F^{k+1}_l \setminus F^k_l$, but this argument is identical to the one given for the basis, except that we of course now refer to Figure~\ref{fig:recursivealgebradef2}.
     \end{claimproof}
     
    The lemma now follows from the definition of $\mathbb{F}_l$. Indeed, $\mathbb{F}_l$ satisfies the identities of $\Lambda_l$, since any concrete evaluation of a basic $\tau_l$-operation in $\mathbb{F}_l$ occurs within some $F^k_l \subseteq F_l$. hence \emph{1.} of the claim applies. To see $\mathbb{F}_l$ is free, we take any $\phi: \{x,y \} \to \A$ for $\mathbb{A} \in \mathcal{W}_l$ and set $\overline{\phi} = \bigcup_{1 \leq k } \phi_k$. Clearly, $\overline{\phi}$ extends $\phi$ and is a $\tau_l$-algebra homomorphism. \qedhere
\end{proof}

Having established a representation of the $2$-generated free algebra for $\mathcal{W}_l$, we move to the main idea that makes the argument work.

\begin{lemma}\label{lemma:termsarefreegenerators}
    Let $l \geq 1$,  and $m \geq 1$. Consider a set of $\tau_l$-terms of the form
    \[
    T= \{r_j(a_j,b_j,c_j,d_j) : r_j \in \tau_{l} \text{ and } a_j, b_j, c_j, d_j \in F_l, \text{ for } 1\leq j \leq m\}
    \] 
    and let 
    \[Z = \{ z: \text{there exists $r_j(a_j,b_j,c_j,d_j) \in T$ with $ r_j = s_z$} \}
    \]
    be the set of indices of basic $\tau_l$-operation symbols which appear as the outer symbol for a term in $T$. 
    If there exists $0\leq i \leq 2l+1$ such that $|z -i | \geq 2$ for all $z \in Z$, then 
    \[
    r_{j_1}^{\mathbb{F}_l}(a_{j_1},b_{j_1},c_{j_1},d_{j_1}) = r_{j_2}^{\mathbb{F}_l}(a_{j_2},b_{j_2},c_{j_2},d_{j_2}) \iff 
    r_{j_1}^{\mathbb{G}}(a_{j_1},b_{j_1},c_{j_1},d_{j_1}) = r_{j_2}^{\mathbb{G}}(a_{j_2},b_{j_2},c_{j_2},d_{j_2}),
    \]
    where $\mathbb{G} = \mathbb{F}_{\mathcal{W}_{l,i}}(\bigcup_{1 \leq j \leq m} \{a_j, b_j, c_j, d_j \})$ is the algebra for $\mathcal{W}_{l,i}$ freely generated by 
    \[\bigcup_{1 \leq j \leq m} \{a_j, b_j, c_j, d_j \},\]
    for all $r_{j_1}^{\mathbb{F}_l}(a_{j_1},b_{j_1},c_{j_1},d_{j_1}), r_{j_2}^{\mathbb{F}_l}(a_{j_2},b_{j_2},c_{j_2},d_{j_2}) \in T$.
\end{lemma}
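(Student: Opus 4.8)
The plan is to prove the two directions of the biconditional separately. For each $j$ write $z_j$ for the index with $r_j = s_{z_j}$, and recall that the hypothesis provides a single $i$ with $z_j \le i-2$ or $z_j \ge i+2$ for every $j$; in particular none of $s_{i-1}, s_i, s_{i+1}$ occurs as an outer symbol of a term in $T$.

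The implication from right to left is the routine one. The $\tau_{l,i}$-reduct of $\mathbb{F}_l$ satisfies every identity of $\Lambda_{l,i}$, since these form a subset of $\Lambda_l$, so this reduct lies in $\mathcal{W}_{l,i}$. By freeness of $\mathbb{G}$, sending each free generator to the corresponding element of $F_l$ extends to a homomorphism $h\colon \mathbb{G}\to \mathbb{F}_l$ into the $\tau_{l,i}$-reduct. Because every $r_j$ belongs to $\tau_{l,i}$, we have $h\bigl(r_j^{\mathbb{G}}(a_j,b_j,c_j,d_j)\bigr) = r_j^{\mathbb{F}_l}(a_j,b_j,c_j,d_j)$, so any equality valid in $\mathbb{G}$ is carried to $\mathbb{F}_l$. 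Thus the real content is the converse: equalities in $\mathbb{F}_l$ among the terms of $T$ must already hold in $\mathbb{G}$, equivalently $h$ is injective on the set of values $\{r_j^{\mathbb{G}}(a_j,b_j,c_j,d_j)\}_j$.

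For the converse I would compute the normal form of each $r_j^{\mathbb{F}_l}(a_j,b_j,c_j,d_j)$ using the free-algebra tables of Lemma~\ref{lem:freealgebra}. By construction the value depends only on the equality pattern of the tuple $(a_j,b_j,c_j,d_j)$: for a tuple with at least three distinct entries it is a fresh representative, for a constant tuple it is that entry, and for a two-valued tuple Figures~\ref{fig:heightonetermx} and~\ref{fig:recursivealgebradef2} prescribe either a collapse to an entry or a collapse onto a neighbouring symbol. The decisive point is that every collapse invoked for the symbol $s_{z_j}$ is an instance of a $\Lambda_l$-identity involving only $s_{z_j}$ and a consecutive symbol $s_{z_j\pm1}$: the collapses to an entry come from identities $1$, $2$, $8$ (and their substitution instances), and the collapses onto a neighbour come from the connecting identities $3$--$7$, each of which relates only the pair $\{s_k,s_{k+1}\}$. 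Deleting $s_i$ severs this chain of connecting identities into the two blocks $\{s_0,\dots,s_{i-1}\}$ and $\{s_{i+1},\dots,s_{2l+1}\}$, and since $z_j\le i-2$ or $z_j\ge i+2$ the partner $z_j\pm1$ lies in the same block and differs from $i$. Hence each reduction step is a substitution instance of an identity of $\Lambda_{l,i}$, and applying exactly the same step inside $\mathbb{G}$ shows that $r_j^{\mathbb{G}}(a_j,b_j,c_j,d_j)$ reduces to the same normal form $w_j$.

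It then remains to compare normal forms, which I expect to be the crux. Each $w_j$ is either one of the generators $a_j,b_j,c_j,d_j$ or a non-collapsing representative $s_{z'_j}(a_j,b_j,c_j,d_j)$ with $z'_j\ne i$, and these are the canonical representatives furnished by Lemma~\ref{lem:freealgebra}. I would use the grading $F_l^0\subseteq F_l^1\subseteq\cdots$ — in which a representative $s_{z'}(\cdots)$ lies strictly above its arguments — to argue that matching normal forms in $\mathbb{F}_l$ forces either the same generator or the same outer symbol applied to a componentwise equal tuple, an identification already present among the free generators of $\mathbb{G}$. Transporting this through the reductions $r_j^{\mathbb{G}}(\cdots)=w_j$ then gives $r_{j_1}^{\mathbb{G}}(\cdots)=r_{j_2}^{\mathbb{G}}(\cdots)$. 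The main obstacle is to rule out the two ways an equality in $\mathbb{F}_l$ might fail to descend to $\mathbb{G}$: a reduction passing through the deleted symbol $s_i$, and an argument coinciding in $F_l$ with a representative assembled from the others. The hypothesis $|z_j-i|\ge2$ together with the consecutive-only linking of the connecting identities excludes the first, and the careful use of the grading — keeping track of the level at which each argument and each representative is introduced — is what must be marshalled against the second.
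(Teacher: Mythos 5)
Your right-to-left direction is correct and coincides with the paper's (the $\tau_{l,i}$-reduct of $\mathbb{F}_l$ lies in $\mathcal{W}_{l,i}$, and freeness of $\mathbb{G}$ gives the transporting homomorphism), and your analysis of the individual reduction steps is also sound: every collapse prescribed by the tables in Figures~\ref{fig:heightonetermx} and~\ref{fig:recursivealgebradef2} is an instance of a $\Lambda_l$-identity involving only the outer symbol or a consecutive pair of symbols, so the hypothesis $|z-i|\geq 2$ keeps all of these identities inside $\Lambda_{l,i}$. The genuine gap is the step you yourself flag as the crux and then defer to ``careful use of the grading'': the claim that matching values in $\mathbb{F}_l$ force either the same generator or the same outer symbol applied to a componentwise equal tuple. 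That dichotomy fails exactly in the mixed case you name (an argument of one term coinciding in $F_l$ with the representative produced by another term), and the grading cannot exclude that case, because nothing in the hypotheses prevents an argument $a_{j_1}$ from sitting at a \emph{higher} level than the fresh representative output by the other term. Concretely, take $l=3$, let $u=s_3(xyyx)\in F^1_3$, and let $T=\{s_0(x,u,u,u),\ s_3(x,y,y,x)\}$, so $Z=\{0,3\}$ and $i=5$ satisfies $|z-i|\geq 2$ for all $z\in Z$. In $\mathbb{F}_3$ both terms evaluate to $u$: the first by identity \emph{2.} of $\Lambda_l$ (row $pqqq$ of the tables), the second because $s_3(xyyx)$ is the fresh representative, which \emph{is} the element $u$. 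Yet in $\mathbb{G}=\mathbb{F}_{\mathcal{W}_{3,5}}(\{x,y,u\})$ the element $u$ is a free generator independent of $x$ and $y$: composing with the projection interpretation of Lemma~\ref{lem:projectionsinterpretation} (interpret $s_0,\dots,s_4$ as third projection and $s_6,s_7$ as first projection) and the assignment $x,y\mapsto 0$, $u\mapsto 1$ shows $s_3^{\mathbb{G}}(x,y,y,x)\neq u=s_0^{\mathbb{G}}(x,u,u,u)$. So in this configuration the equality in $\mathbb{F}_l$ does not descend to $\mathbb{G}$, and no bookkeeping of the levels at which elements are introduced can rule the configuration out.

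This is precisely the point where all the difficulty of the lemma lives, and it is where your route and the paper's diverge. The paper does not compute normal forms globally; it runs an induction on the least $k$ with all arguments in $F^k_l$, disposes of the all-arguments-old case through the free subalgebra $\mathbb{F}_{\mathcal{W}_{l,i}}(X)\leq\mathbb{G}$, and then treats the collapse-versus-no-collapse situation above as an explicit case of its analysis (its second bullet); whether that one-line disposal is itself adequate is a separate question, but your proposal contains no argument at all there --- only the statement that one is needed. As the example shows, the missing argument cannot take the form you sketch: any correct completion must exploit structure on $T$ beyond the stated sparseness of $Z$ and the grading of $F_l$, namely some control over which elements of $F_l$ are permitted to occur as arguments relative to the outputs of the other terms.
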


\begin{proof}
    We first informally describe what the theorem statement means, which is that an equality in $\mathbb{F}_l$ between the outputs of basic $\tau_{l,i}$-operations (recall these are all basic $\tau_l$-operations except for $s_i$) holds if and only if the equality can be deduced from the identities in $\Lambda_{l,i}$ applied exactly to the basic operation symbols in question. Essentially, the full strength of $\Lambda_l$ has already been applied to the subterms $a_j,b_j,c_j,d_j$ in the recursive definition of $\mathbb{F}_l$, so even if these subterms contain all basic $\tau_l$-symbols, they can be treated like free generators in the variety $\mathcal{W}_{l,i}$, provided the set $Z$ of indices of basic symbols is sufficiently sparse among the possible indices $\{0, \dots, 2l+1\}$.

    We fix $Z$ and $i$ at the outset. The proof proceeds inductively on the least $k$ so that the maximal subterms $\bigcup_{1 \leq j \leq m} \{a_j, b_j, c_j, d_j \} $ are a subset of $F^k_l$. The basis is the case $k=0$, so all of the $a_j, b_j, c_j, d_j \in \{x,y\}$. We again refer to Figure~\ref{fig:heightonetermx}. Suppose that $i=0$, so none of the $r_j$ are equal to $s_0$. Now, the only potential candidates for an equality among the terms which appear in the other columns are when the outputs belong to the set
    \[
    \{x,y\} \cup O \cup E,
    \]
    where 
    \begin{align*}
    O =  &\bigcup_{0 \leq k < l} \{ s_{2k+1}(xxyx),s_{2k+1}(xyxy), s_{2k+1}(yxyx), s_{2k+1}(yyxy)\} \text{ and }\\ E= &\bigcup_{1 \leq k \leq l} \{s_{2k}(xxyy),  s_{2k}(xyxx), s_{2k}(yxyy), s_{2k}(yyxx) \}.
    \end{align*}
    If we have an equality where
    \[
    r_{j_1}^{\mathbb{F}_l}(a_{j_1},b_{j_1},c_{j_1},d_{j_1}) = r_{j_2}^{\mathbb{F}_l}(a_{j_2},b_{j_2},c_{j_2},d_{j_2}) \in \{x,y\}
    \]
    for $a_{j_1},b_{j_1}, c_{j_1}, d_{j_1}, a_{j_2},b_{j_2}, c_{j_2}, d_{j_2} \in \{x,y\}$,
    then this equality can be deduced from the $\Lambda_l$-identities which involve only $r_{j_1}$ or $r_{j_2}$, hence it can be deduced from $\Lambda_{l,i}$. The equalities of the form
     \begin{align*}
    r_{j_1}^{\mathbb{F}_l}(a_{j_1},b_{j_1},c_{j_1},d_{j_1}) = r_{j_2}^{\mathbb{F}_l}(a_{j_2},b_{j_2},c_{j_2},d_{j_2}) \in  O \cup E
    \end{align*}
    can be deduced from a single $\Lambda_l$-identity which involves only $r_{j_1}$ and $r_{j_2}$, hence they are also deducible from $\Lambda_{l,i}$. The backwards implication is easy, since if \[r_{j_1}^{\mathbb{G}}(a_{j_1},b_{j_1},c_{j_1},d_{j_1}) = r_{j_2}^{\mathbb{G}}(a_{j_2},b_{j_2},c_{j_2},d_{j_2})\] holds, then it holds in the $\tau_{l,i}$-reduct of $\mathbb{F}_l$, since this reduct belongs to $\mathcal{W}_{l,i}$.

   Now we prove the inductive step. Assume that the theorem statement holds for the fixed $Z$ and $i$ when the subterms $\bigcup_{1 \leq j \leq m} \{a_j, b_j, c_j, d_j \} \subseteq F^k_l$ for some $k \geq 0$. Suppose we have a set of terms $T$ as in the theorem statement so that the subterms $\bigcup_{1 \leq j \leq m} \{a_j, b_j, c_j, d_j \} \subseteq F^{k+1}_l$. Let $X = F^{k}_l \cap \left(\bigcup_{1 \leq j \leq m} \{a_j, b_j, c_j, d_j \} \right) $. If we have an equality where
    \[
    r_{j_1}^{\mathbb{F}_l}(a_{j_1},b_{j_1},c_{j_1},d_{j_1}) = r_{j_2}^{\mathbb{F}_l}(a_{j_2},b_{j_2},c_{j_2},d_{j_2}) 
    \]
    where $ \{a_{j_1},b_{j_1},c_{j_1},d_{j_1}, a_{j_2},b_{j_2},c_{j_2},d_{j_2}\} \subseteq X$, then the inductive hypothesis implies that 
     \[
    r_{j_1}^{\mathbb{G}'}(a_{j_1},b_{j_1},c_{j_1},d_{j_1}) = r_{j_2}^{\mathbb{G}'}(a_{j_2},b_{j_2},c_{j_2},d_{j_2}),
    \]
    where $\mathbb{G}' = \mathbb{F}_{\mathcal{W}_{l,i}}(X)$. Since $\mathbb{G}'$ is a freely generated subalgebra of $\mathbb{G}= \mathbb{F}_{\mathcal{W}_{l,i}}(\bigcup_{1 \leq j \leq m} \{a_j, b_j, c_j, d_j \} )$, it follows that 
     \[
    r_{j_1}^{\mathbb{G}}(a_{j_1},b_{j_1},c_{j_1},d_{j_1}) = r_{j_2}^{\mathbb{G}}(a_{j_2},b_{j_2},c_{j_2},d_{j_2}). 
    \]

    Hence, we are left to consider the situation where we have an equality 
    \[
    r_{j_1}^{\mathbb{F}_l}(a_{j_1},b_{j_1},c_{j_1},d_{j_1}) = r_{j_2}^{\mathbb{F}_l}(a_{j_2},b_{j_2},c_{j_2},d_{j_2}) 
    \]
    and at least one element of  $\{a_{j_1},b_{j_1},c_{j_1},d_{j_1}, a_{j_2},b_{j_2},c_{j_2},d_{j_2}\}$ belongs to $F^{k+1}_l \setminus F^k_l$.  Without loss of generality, we suppose that this element belongs to $\{a_{j_1}, b_{j_1}, c_{j_1}, d_{j_1}\}$.
    We consider the following cases.
    \begin{itemize}
       
            \item Both $r_{j_1}^{\mathbb{F}_l}(a_{j_1},b_{j_1},c_{j_1},d_{j_1}) \in \{a_{j_1},b_{j_1},c_{j_1},d_{j_1}\}$ and $r_{j_2}^{\mathbb{F}_l}(a_{j_2},b_{j_2},c_{j_2},d_{j_2}) \in \{a_{j_2},b_{j_2},c_{j_2},d_{j_2}\}$. In this case the operations $r_{j_1}^{\mathbb{F}_l}$ and $r_{j_2}^{\mathbb{F}_l}$ each apply a single $\Lambda_l$-identity which only involves $r_{j_1}$ or $r_{j_2}$, respectively. These identities each belong to $\Lambda_{l,i}$, hence the equality 
             \[
    r_{j_1}^{\mathbb{G}}(a_{j_1},b_{j_1},c_{j_1},d_{j_1}) = r_{j_2}^{\mathbb{G}}(a_{j_2},b_{j_2},c_{j_2},d_{j_2}) 
    \]
    holds. 
    \item Suppose $r_{j_1}^{\mathbb{F}_l}(a_{j_1},b_{j_1},c_{j_1},d_{j_1}) \in \{a_{j_1},b_{j_1},c_{j_1},d_{j_1}\}$ and $r_{j_2}^{\mathbb{F}_l}(a_{j_2},b_{j_2},c_{j_2},d_{j_2}) \notin \{a_{j_2},b_{j_2},c_{j_2},d_{j_2}\}$. Let $z \in Z$ be such that $r_{j_2} = s_z$. Then 
    \[
    r_{j_2}^{\mathbb{F}_l}(a_{j_2},b_{j_2},c_{j_2},d_{j_2}) = s_z^{\mathbb{F}_l}(a_{j_2},b_{j_2},c_{j_2},d_{j_2}) \in \{s_z(a_{j_2},b_{j_2},c_{j_2},d_{j_2}), s_{z-1}(a_{j_2},b_{j_2},c_{j_2},d_{j_2})\}.
    \]
     Since we assume that $|z-i| \geq 2$ for all $z \in Z$, either of these cases follows from an identity in $\Lambda_{l,i}$, so we again have that 
      \[
    r_{j_1}^{\mathbb{G}}(a_{j_1},b_{j_1},c_{j_1},d_{j_1}) = r_{j_2}^{\mathbb{G}}(a_{j_2},b_{j_2},c_{j_2},d_{j_2}) 
    \]
    holds. 
    \item Suppose $r_{j_1}^{\mathbb{F}_l}(a_{j_1},b_{j_1},c_{j_1},d_{j_1}) \notin \{a_{j_1},b_{j_1},c_{j_1},d_{j_1}\}$. Then it must be that at least one element of $\{a_{j_2},b_{j_2},c_{j_2},d_{j_2}\}$ belongs to $F^{k+1}_l \setminus F^k_l$ and $r_{j_2}^{\mathbb{F}_l}(a_{j_2},b_{j_2},c_{j_2},d_{j_2}) \notin \{a_{j_2},b_{j_2},c_{j_2},d_{j_2}\}$ also, as otherwise $r_{j_1}^{\mathbb{F}_l}(a_{j_1},b_{j_1},c_{j_1},d_{j_1})  \in F^{k+2}_l \setminus F^{k+1}_l$ and $ r_{j_2}^{\mathbb{F}_l}(a_{j_2},b_{j_2},c_{j_2},d_{j_2}) \in F^{k+1}_l$. We consult Figure~\ref{fig:recursivealgebradef2} and see that, again, $\Lambda_{l,i}$ identities are adequate, hence
      \[
    r_{j_1}^{\mathbb{G}}(a_{j_1},b_{j_1},c_{j_1},d_{j_1}) = r_{j_2}^{\mathbb{G}}(a_{j_2},b_{j_2},c_{j_2},d_{j_2}) 
    \]
    holds.

    \end{itemize}
    The reverse implication of the theorem statement follows from the same argument that we gave for the basis of the induction. Therefore, the theorem is proved. 

\end{proof}

\begin{theorem}\label{thm:notstrong}
    Let $N\geq 0$ and let $l > 2\cdot4^N$. The variety $\mathcal{V}_N$ does not interpret in the variety $\mathcal{W}_l$.
\end{theorem}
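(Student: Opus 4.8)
The plan is to turn the hypothesis ``$\mathcal{V}_N$ interprets in $\mathcal{W}_l$'' into a concrete membership statement and then obstruct it by a parity invariant. Interpreting $\mathcal{V}_N$ in $\mathcal{W}_l$ is the same as $\mathcal{W}_l$ possessing $\Sigma_N$-terms, which by item~3 of Theorem~\ref{thm:newsdmeetchar} is equivalent to $\Square[x][x][x][y] \in (V\circ H)^N(E_{\mathcal{W}_l}(x,y))$. Assuming this for contradiction, I would unpack the membership into its witnessing diagram: a $2^N\times 2^N$ array of $4^N$ elementary matrices $\widehat t_1(\bar g),\dots,\widehat t_{4^N}(\bar g)$, where $\bar g$ is the tuple of the six generators of $E_{\mathcal{W}_l}(x,y)$ and each $\widehat t_k$ is a $6$-ary $\tau_l$-term, glued along shared edges so that the outer boundary reads $\Square[x][x][x][y]$. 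Using the explicit free algebra $\mathbb{F}_l$ of Lemma~\ref{lem:freealgebra}, let $Z$ be the set of indices of the outermost (root) symbols $s_z$ of the $\widehat t_k$; since there are only $4^N$ matrices, $|Z|\le 4^N$.

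Next I would run the pigeonhole step. The admissible indices form $\{0,\dots,2l+1\}$, a set of size $2l+2$, while the indices within distance $1$ of $Z$ number at most $3\cdot 4^N$. Since $l>2\cdot 4^N$ gives $2l+2>4\cdot 4^N+2>3\cdot 4^N$, there is an index $i$ with $|z-i|\ge 2$ for every $z\in Z$ (chosen so that, in addition, at least one of $0,2l+1$ stays at distance $\ge 2$ from $i$, for the boundary step below). I then fix the algebra $P\in\mathcal{W}_{l,i}$ furnished by Lemma~\ref{lem:projectionsinterpretation} on the set $\{0,1\}$, all of whose operations are projections, and let $P^+$ be its extension to a $\tau_l$-algebra obtained by reading $s_i$ as a projection as well.

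The heart of the argument is transferring the diagram to $P^+$ via Lemma~\ref{lemma:termsarefreegenerators}. Every entry of every elementary matrix is its root symbol $s_z$ ($z\in Z$) applied to four subterms lying in $F_l$, so the full collection of diagram entries has exactly the form required by Lemma~\ref{lemma:termsarefreegenerators} relative to the sparse index $i$; writing each boundary corner $x$ (resp.\ $y$) as $s_{z'}(x,x,x,x)$ with $z'$ at distance $\ge 2$ from $i$ puts the boundary conditions into the same form, and a bare projection among the $\widehat t_k$ yields a generator directly. Lemma~\ref{lemma:termsarefreegenerators} then replaces each $\mathbb{F}_l$-equality of entries demanded by the gluing and boundary with the corresponding equality in $\mathbb{G}=\mathbb{F}_{\mathcal{W}_{l,i}}$ on the subterms, treated as free generators. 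Composing with the homomorphism $\mathbb{G}\to P$ that sends each subterm $\sigma$ to $\sigma^{P^+}(0,1)$ (legitimate because $P\in\mathcal{W}_{l,i}$), an entry $s_z(\sigma_1,\sigma_2,\sigma_3,\sigma_4)$ maps to $\bigl(s_z(\sigma_1,\sigma_2,\sigma_3,\sigma_4)\bigr)^{P^+}(0,1)$ since $z\ne i$. Hence all required equalities survive the evaluation $E\mapsto E^{P^+}(0,1)$, and because every $\widehat t_k^{P^+}$ is a composite of projections, hence a projection, each elementary matrix $\widehat t_k(\bar g)$ evaluates to one of the six generator matrices over $\{0,1\}$. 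The outcome is a genuine $(V\circ H)^N$-diagram over $\{0,1\}$ built entirely from the six generators, with outer boundary $\Square[0][0][0][1]$.

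Finally I would obstruct this with parity. Assign to a $\{0,1\}$-matrix the value $\mathrm{TL}\oplus\mathrm{TR}\oplus\mathrm{BL}\oplus\mathrm{BR}$ (the XOR of its four entries). A direct check shows this value is additive under both $H$ and $V$, and all six generators have value $0$, so every member of $(V\circ H)^N$ of the generator set has value $0$; but $\Square[0][0][0][1]$ has value $1$, a contradiction. I expect the main obstacle to be precisely the transfer step: one must verify that all diagram entries, including the boundary corners, genuinely meet the hypotheses of Lemma~\ref{lemma:termsarefreegenerators} and that the collapse to generators is consistent across every shared edge. This bookkeeping is what the introduction organizes as an induction on the complexity of the terms generating $E_{\mathcal{W}_l}(x,y)$, peeling the outer symbols a level at a time; with the full strength of Lemma~\ref{lemma:termsarefreegenerators} in hand, the single application above should already suffice, but the inductive packaging is the safest way to keep the edge-consistency under control.
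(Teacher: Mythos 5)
Your reduction to the membership statement, the pigeonhole choice of a sparse index $i$, and the plan to combine Lemma~\ref{lemma:termsarefreegenerators} with the projection algebra of Lemma~\ref{lem:projectionsinterpretation} all match the paper. The gap is in the transfer step, specifically the claim that each elementary matrix ``evaluates to one of the six generator matrices over $\{0,1\}$''. Lemma~\ref{lemma:termsarefreegenerators} is a one-layer statement: sparseness is assumed (and, with only $4^N$ matrices in the diagram, can only be arranged) for the outermost symbols, and the lemma then lets you replace exactly those roots by projections while treating the canonical subterms $\sigma_j\in F_l$ as opaque free generators of $\mathbb{G}$. Consequently your homomorphism $h\colon\mathbb{G}\to P$ sends an entry $s_z^{\mathbb{G}}(\sigma_1,\dots,\sigma_4)$ to a projection applied to the values $\sigma_j^{P^+}(0,1)$; the transferred matrix is therefore the entry-wise $P^+$-evaluation of one of the argument matrices $\alpha_w,\beta_w,\gamma_w,\delta_w$ --- still a deep elementary matrix --- and not the full term evaluation $\widehat t_k^{P^+}(\bar g)$ that your parity count needs. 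The two agree only if evaluating canonical forms in $P^+$ commutes with the $\Lambda_l$-rewriting used to construct $\mathbb{F}_l$, and it cannot: $\Lambda_l$ is an idempotent condition entailing congruence meet semidistributivity, hence is not satisfiable by projections on a two-element set, so whichever projection you assign to $s_i$, some $\Lambda_l$-identity involving $s_i$ fails in $P^+$. Since $s_i$ may occur arbitrarily deep inside the subterms, the map $\sigma\mapsto\sigma^{P^+}(0,1)$ is not a homomorphism on $\mathbb{F}_l$, the transferred matrices need not be generator matrices (nor even have zero parity), and the XOR obstruction does not apply to them. Your closing suggestion that ``the single application above should already suffice'' is precisely what fails.

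The inductive packaging you defer to is therefore not optional bookkeeping but the actual proof, and it is structured differently from your one-shot collapse: the paper never evaluates into a two-element algebra. It filters $E_{\mathcal{W}_l}(x,y)=\bigcup_k E_k$ by term complexity, takes $k$ minimal with $\Square[x][x][x][y]\in (V\circ H)^N(E_k)$, and uses Lemma~\ref{lemma:termsarefreegenerators} together with a projection algebra on the universe $F^k_l$ to strip one layer of roots, landing a witnessing diagram whose matrices lie in $E_{k-1}$, with all gluing and boundary equalities still holding in $\mathbb{F}_l$; the contradiction is with minimality of $k$. Crucially, each (implicit) descent step re-chooses the root-index set $Z$ and the sparse index $i$ for the new top layer, which is exactly what your single global choice of $i$ cannot do for the deeper layers, since those layers may use every symbol of $\tau_l$. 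Your parity invariant is a nice rigorous substitute for the paper's remark that $k\neq 0$ is obvious, but it cannot carry the descent itself.
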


\begin{proof}
    By definition, $\Var_N$ is the class of algebras in the signature $\{t_1, \dots, t_{4^N} \}$ of $6$-ary operation symbols which satsify the $\Sigma_N$-identities. Suppose that $\Var_N$ interprets in $\mathcal{W}_l$, i.e., there are $\mathcal{W}_l$ terms $\{t_1, \dots, t_{4^N}\}$ which satisfy the $\Sigma_N$-identities. By the definition of $\Sigma_N$ (see \emph{3.} of Theorem~\ref{thm:newsdmeetchar} and the discussion afterwords) and the fact that $\mathbb{F}_l \in \mathcal{W}_l$ (see Lemma~\ref{lem:freealgebra}), it follows that 
    \begin{equation}\label{eq:condition}
    \Square[x][x][x][y] \in (V \circ H)^N(E_{\mathcal{W}_l}(x,y)),
    \end{equation}
    where 
    \[
    E_{\mathcal{W}_l}(x,y) :=\Sg_{(\mathbb{F}_{l})^{2^2}}
    \left(
    \left\{
 \Square[x][x][x][x], \Square[y][y][y][y], \Square[x][y][x][y], \Square[y][x][y][x], \Square[x][x][y][y], \Square[y][y][x][x]
    \right\}
    \right).
\]

We will show that this is impossible by an induction on a recursively defined sequence $E_0 \subseteq E_1 \subseteq \dots \subseteq E_k \subseteq \dots $ whose union is $ E_{\mathcal{W}_l}(x,y)$. Indeed, we set
\begin{align*}
E_0 &=  \left\{
 \Square[x][x][x][x], \Square[y][y][y][y], \Square[x][y][x][y], \Square[y][x][y][x], \Square[x][x][y][y], \Square[y][y][x][x]
    \right\} \text{ and }\\
E_{k+1} &= \{ r^{(\mathbb{F}_{l})^{2^2}}( \alpha, \beta, \gamma, \delta) : r\in \tau_l \text{ and } \alpha, \beta, \delta, \gamma \in E_k \}  \text{ for $k \geq 0 $}. 
\end{align*}
Since all basic $\tau_l$-operations are idempotent in $\mathbb{F}_l$, it follows that $E_k \subseteq E_{k+1}$. Notice that $E_k \subseteq (F^k_l)^{2^2}$, that is, all entries of matrices in $E_k$ are elements of $F^k_l$. Obviously, we have
\[
 E_{\mathcal{W}_l}(x,y) = \bigcup_{k \geq 0 } E_k,
\]
hence if the set membership of~(\ref{eq:condition}) holds, then there exists a minimal $k$ so that 
\[
 \Square[x][x][x][y] \in (V \circ H)^N(E_{k}).
\]
We note that $k$ is obviously not equal to $0$. 
By the definition of the vertical and horizontal relational composition operators $V$ and $H$, there exist $4^N$-many matrices $\zeta_1, \dots, \zeta_{4^N}$ belonging to $E_k$ which can be arranged in a diagram with equalities between their coordinates like those in Figure~\ref{fig:sigma2}. By definition of $E_k$, each of these matrices is of the form \[
r_1^{(\mathbb{F}_{l})^{2^2}}(\alpha_1, \beta_1, \gamma_1, \delta_1), \dots, r_{4^N}^{(\mathbb{F}_{l})^{2^2}}(\alpha_{4^N}, \beta_{4^N}, \gamma_{4^N}, \delta_{4^N}),
\]
where each $r_{w} \in \tau_l$ and $\alpha_w, \beta_w, \gamma_w, \delta_w \in E_{k-1}$, i.e.\ each matrix is the output of a basic $\tau_l$-operation with each evaluation coming from $E_{k-1}$. Let $Z$ be the set of all indices of the basic $\tau_l$-operation symbols which are used for the $\zeta_1, \dots, \zeta_{4^N}$ and let 
\[
T= \{s_z(a_j,b_j,c_j,d_j) : z \in Z \text{ and } a_j, b_j, c_j, d_j \in F_k\}.
\]
 We assume that $l > 2\cdot4^N$, so all of the hypotheses of Lemma~\ref{lemma:termsarefreegenerators} apply. Let $0 \leq i \leq 2l+1$ be some index so that $|z-i| \geq 2$ for all $z \in Z$. 

It follows that, for all $s_{z_1}^{\mathbb{F}_l}(a_{j_1},b_{j_1},c_{j_1},d_{j_1}) $ and $ s_{z_2}^{\mathbb{F}_l}(a_{j_2},b_{j_2},c_{j_2},d_{j_2})$ in $T$,  \[
    s_{z_1}^{\mathbb{F}_l}(a_{j_1},b_{j_1},c_{j_1},d_{j_1}) = s_{z_2}^{\mathbb{F}_l}(a_{j_2},b_{j_2},c_{j_2},d_{j_2}) \iff 
    s_{z_1}^{\mathbb{G}}(a_{j_1},b_{j_1},c_{j_1},d_{j_1}) = s_{z_2}^{\mathbb{G}}(a_{j_2},b_{j_2},c_{j_2},d_{j_2}),
    \]
    where $\mathbb{G} = \mathbb{F}_{\mathcal{W}_{l,i}}(F^k_l)$. By Lemma~\ref{lem:projectionsinterpretation}, there exists a $\tau_{l,i}$-algebra $\mathbb{S}$ with universe $F^k_l$ in which each basic $\tau_{l,i}$-symbol interprets as a projection. If we apply this interpretation by projections to each of the $r_1, \dots, r_{4^N} \in \tau_{l,i}$ which produces one of the matrices $\zeta_1, \dots, \zeta_{4^N}$, it follows that
    \[
    \Square[x][x][x][y] \in (V \circ H)^N\left(\bigcup_{1 \leq w \leq 4^N} \{\alpha_w, \beta_w, \gamma_w, \delta_w \}\right),
    \]
    which is a contradiction to the minimality of $k$.  
    
\end{proof}

\begin{corollary}
    The class of congruence meet semidistributive varieties is not characterized by a strong Maltsev condition.
\end{corollary}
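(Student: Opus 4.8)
The plan is to deduce the corollary from Theorem~\ref{thm:notstrong} by a soft argument that leverages the transitivity of interpretability. I would argue by contradiction. Suppose the class of congruence meet semidistributive varieties were characterized by a strong Maltsev condition, i.e.\ suppose there were a single finite package of identities $\Sigma$ in some signature $\sigma$ such that a variety $\mathcal{W}$ is congruence meet semidistributive if and only if $\mathcal{W}$ has terms witnessing $\Sigma$. Let $\mathcal{V}$ denote the variety of $\sigma$-algebras presented by $\Sigma$. In the language of interpretability, the assumed characterization reads: $\mathcal{W}$ is congruence meet semidistributive if and only if $\mathcal{V}$ interprets in $\mathcal{W}$.

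First I would observe that $\mathcal{V}$ is \emph{itself} congruence meet semidistributive. This is immediate, since the basic operation symbols of $\mathcal{V}$ witness $\Sigma$ by definition, so $\mathcal{V}$ interprets in itself and the assumed characterization applies. Next, because the conditions $\Sigma_1, \dots, \Sigma_n, \dots$ comprise a Maltsev condition for congruence meet semidistributivity (the theorem following Theorem~\ref{thm:newsdmeetchar}), there exists some $N \geq 0$ such that $\mathcal{V}$ has $\Sigma_N$-terms; equivalently, $\mathcal{V}_N$ interprets in $\mathcal{V}$. Now fix any $l > 2\cdot 4^N$. By Lemma~\ref{lem:freealgebra} and the discussion preceding it, together with Theorem~\ref{thm:newsdmeetchar}, the variety $\mathcal{W}_l$ is congruence meet semidistributive, so the assumed characterization gives that $\mathcal{V}$ interprets in $\mathcal{W}_l$. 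Chaining the interpretations $\mathcal{V}_N \to \mathcal{V} \to \mathcal{W}_l$ and using transitivity of interpretability yields that $\mathcal{V}_N$ interprets in $\mathcal{W}_l$. This contradicts Theorem~\ref{thm:notstrong}, which asserts that $\mathcal{V}_N$ does not interpret in $\mathcal{W}_l$ whenever $l > 2\cdot 4^N$. The contradiction establishes the corollary.

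The genuine mathematical content is carried entirely by Theorem~\ref{thm:notstrong}; the corollary is a formal consequence and I anticipate no real obstacle. The only points that demand care are bookkeeping ones: correctly translating the informal notion of a strong Maltsev condition into the statement that it is presented by a single finitely presented variety $\mathcal{V}$ with $\mathcal{W} \mapsto$ (is congruence meet semidistributive) equivalent to ($\mathcal{V}$ interprets in $\mathcal{W}$), and noting that such a presenting variety $\mathcal{V}$ necessarily lies in the class it characterizes. Once these are pinned down precisely, the transitivity of interpretability does the rest, and the free choice of $l$ as large as we like relative to $4^N$ is exactly what defeats any fixed finite $\Sigma$.
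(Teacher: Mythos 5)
Your proposal is correct and follows essentially the same argument as the paper: take the variety $\mathcal{V}$ presented by the hypothetical strong condition, note it is itself congruence meet semidistributive and hence has $\Sigma_N$-terms for some $N$, then compose interpretations into a $\mathcal{W}_l$ with $l > 2\cdot 4^N$ to contradict Theorem~\ref{thm:notstrong}. The only difference is that you spell out two steps the paper leaves implicit (why $\mathcal{V}$ lies in the class it characterizes, and the explicit choice of $l$), which is fine.
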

\begin{proof}
    Suppose towards a contradiction that there exists a finite package of identities $\Sigma_{SD}$ so that a variety $\Var$ is congruence meet semidistributive if and only if $\Var$ has $\Sigma_{SD}$-terms. By assumption, the variety $\Var_{\Sigma_{SD}}$ presented by $\Sigma_{SD}$ is  congruence meet semidistributive, hence it has $\Sigma_N$-terms for some positive $N$. On the other hand, by Theorem~\ref{thm:notstrong}, there exists $l$ so that $\mathcal{W}_l$ does not have $\Sigma_N$-terms. But, if $\Sigma_{SD}$ is a strong Maltsev condition for congruence meet semidistributivity, then $\mathcal{W}_l$ has $\Sigma_{SD}$-terms. Composing interpretations produces $\Sigma_N$-terms for $\mathcal{W}_l$, which is a contradiction. 
\end{proof}

\bibliographystyle{abbrv}
\bibliography{global}
\end{document}